\newtheorem{theorem}{Theorem}%[section]
\newtheorem{corollary}{Corollary}%[section]
\newtheorem{definition}{Definition}
\newtheorem{example}{Example}%[section]
\newtheorem{lemma}{Lemma}%[section]
\newtheorem{proposition}{Proposition}%[section]
\newtheorem{remark}{Remark}%[section]
\def\Ric{\operatorname{Ric}}
\title{$\eta$-Ricci solitons and $\eta$-Einstein metrics \\ on weak $\beta$-Kenmotsu $f$-manifolds}
\author{Vladimir Rovenski
\footnote{Department of Mathematics, University of Haifa, Mount Carmel, 3498838 Haifa, Israel
\newline e-mail: {\tt vrovenski@univ.haifa.ac.il}}
}
\begin{document}

\date{}

\maketitle

\begin{abstract}
Recent interest among geometers in $f$-structures of K.~Yano is due to the study of topology and dynamics of contact foliations, which
generalize the flow of the Reeb vector field on contact manifolds to higher dimensions.
Weak metric structures introduced by V.~Rovenski and R.~Wolak as a generalization of Hermitian and K\"{a}hler structures, as well as $f$-structures,
allow a fresh look at the classical theory. In~this paper, we study a new $f$-structure of this kind, called the weak $\beta$-Kenmotsu $f$-structure,
as a generalization of K.~Kenmotsu's concept.
We~prove that a weak $\beta$-Kenmotsu $f$-manifold is locally a twisted product of the Euclidean space
and a weak K\"{a}hler mani\-fold. Our main results show that such manifolds with $\beta=const$ and equipped with an $\eta$-Ricci soliton structure
whose potential vector field satisfies certain conditions are $\eta$-Einstein~manifolds of constant scalar~curvature.

%\vskip1.5mm\noindent
\textbf{Keywords}: Twisted product; $\beta$-Kenmotsu $f$-manifold; $\eta$-Einstein manifold; $\eta$-Ricci soliton
%, contact vector field

\vskip1mm
%\noindent
\textbf{Mathematics Subject Classifications (2010)}: 53C12, 53C21
\end{abstract}

%\setcounter{secnumdepth}{4}
%%%%%%%%%%%%%%%%%%%%%%%%%%%%%%%%%%%%%%%%%%

%\setcounter{page}{1}

\section{Introduction}

Contact Riemannian geometry is of growing interest of mathematicians because of its importance for physics, e.g.,~\cite{Blair-survey}.
A metric $f$-structure on a smooth manifold $M^{2n+s}$, see~\cite{b1970,yano-1961}, being a higher dimensional analog of a contact metric structure
($s=1$), is defined by a skew-symmetric (1,1)-tensor $f$ of rank $2n$,
%which satisfies $f^3+f=0$,
orthonormal vector fields $\{\xi_i\}_{1\le i\le s}$
%spanning the $s$-dimensional
%%characteristic
%distribution, $\ker f$
and their dual 1-forms $\{\eta^i\}$
%_{1\le i\le s}$
such~that
\begin{align*}%\label{2.2}
 & {f}^2 = -{\rm id} + \sum\nolimits_{\,i}{\eta}^i\otimes {\xi}_i,\quad {\eta}^i({\xi}_j)=\delta^i_j, \\
 & g({f} X,{f} Y)= g(X, Y) -\sum\nolimits_{\,i}{\eta^i}(X)\,{\eta^i}(Y),
 %\quad \eta^i(X) = g(\xi_i, X),
 \quad X,Y\in\mathfrak{X}_M .
\end{align*}
A metric $f$-structure is a~special case of an {almost pro\-duct structure}
(with Naveira's 36 distinguished classes, see~\cite{nav-1983}),
defined by
%two
complementary orthogonal distributions ${\cal D}=f(TM)$ and ${\cal D}^\perp=\ker f$.
% of a Riemannian mani\-fold.
%Foliations appear when one or both distributions are involutive.
For an $f$-contact manifold, the~$s$-dimensional distribution ${\cal D}^\perp$ is tangent to a totally geodesic foliation,
%
%An~interesting case occurs when $\ker f$
which is spanned by Killing vector fields $\{\xi_i\}$.
Such so-called $\mathfrak{g}$-foliation is defined by a homomorphism of an $s$-dimensional Lie algebra $\mathfrak{g}$ to the Lie algebra of all vector fields on $M$,
%i.e., $M$ admits a $\mathfrak{g}$-foliation,
for example,~\cite{rst-137}.
%In the presence of a compatible metric,
%Such $\mathfrak{g}$-foliation is spanned by Killing vector fields, e.g., \cite{ca-toh}.
Recent interest of geometers in $f$-structures is motivated by the study of the topology and dynamics of contact foliations, especially the existence of closed leaves.
Contact foliations gene\-ralize to higher dimensions the flow of
the Reeb vector field on contact manifolds, and ${K}$-structures are particular case of
%uniform
contact structures, see \cite{Fil-2024}.
A~special class of
%metric
$f$-manifolds, known as Kenmotsu $f$-manifolds, see~\cite{BA-2019,FP06,FP07,SV-2016} (Kenmotsu manifolds when $s=1$, see \cite{kenmotsu1972class}),
can be characterized in terms of warped products of $\mathbb{R}^s$ and K\"{a}hler manifolds.
%Twisted and warped products are popular in geometry as well as in relativity theory.

\smallskip

In \cite{RWo_2}, we defined metric structures
%on smooth manifolds
that generalize
Hermitian, in particular K\"{a}hler, structure, as well as metric $f$-structure, in particular ${\cal C}$-, ${\cal S}$-, K- and $f$-K- contact, structures. These so-called ``weak" $f$-structures (i.e., the complex structure is replaced by a nonsingular skew-symmetric tensor)
are useful for studying totally geodesic and totally umbilical foliations, Killing fields and Einstein-type metrics,
%$\xi$-sectional curvature,
and allow a fresh look at the classical theory.
% and find new applications.
%%%
%The bundle of weak complex structures compatible with a given metric on a manifold $M^{2n}$ leads to the twistor-type construction.
A weak metric $f$-structure
%(as well as metric $f$-structure)
is a~special case of an {almost product structure}, defined by complementary orthogonal distributions ${\cal D}=f(TM)$ and ${\cal D}^\perp=\ker f$
of
%a Riemannian mani\-fold
$(M^{2n+s},g)$.
%$(M,g)$,
%with Naveira's 36 distinguished classes, see~\cite{nav-1983}.
Foliations appear when one or both distributions are involutive.
%%%
%In \cite{rst-43} we proved that the ${\cal S}$-structure is rigid in the class of weak ${\cal S}$-manifolds,
%and a weak metric $f$-structure with parallel tensor $f$ is a weak~${\cal C}$-structure.
%In~\cite{Rov-splitting}, we got a topological obstruction to the existence of weak $f$-K-contact manifolds.
Weak metric $f$-manifolds form a broad class:
%by Theorem~\ref{T-2.1},
a~warped product
%$\mathbb{R}^s\times_\sigma\bar M$
of a Euclidean space $\mathbb{R}^s$ and a weak K\"{a}hler manifold (Definition~\ref{D-wK2} and Example~\ref{Ex-sHK}) is a weak $\beta$-Kenmotsu $f$-manifold (Definition~\ref{D-wK}),
and the
%metric
product of $\mathbb{R}^s$ and a weak K\"{a}hler manifold is a weak ${\cal C}$-manifold.

\smallskip

Solutions of some non-linear  partial differential equations decompose at large time $t$ into solitary waves running with constant speed -- the so-called solitons. Ricci solitons (\textbf{RS}) $\frac12\pounds_V\,g+{\rm Ric}=\lambda\,g$ (where $\pounds$ is the Lie derivative, $V$ a vector field
and $\lambda$ a real constant), being self-similar solutions of the Ricci flow $\partial g/\partial t = -2\Ric_g$, generalize (when~$\pounds_V\,g=0$) Einstein metrics $\Ric = \lambda\,g$, see~\cite{CLN-2006}.
%Observe that there are no Einstein metrics on $f$-K-contact manifolds.

The Ricci flow and RS were studied for Hermitian and K\"{a}hler, manifolds, as well as for almost contact metric, in particular Sasakian, manifolds.
Since some compact manifolds (and $f$-K-contact manifolds) don't admit Einstein metrics.
Cho-Kimura \cite{cho2009ricci} generalized the notion of RS %, i.e., $\frac{1}{2}\,\pounds_{V}\,g +{\rm Ric} = \lambda\,g$ with $\lambda\in\mathbb{R}$,
to $\eta$-\textit{RS}:
\begin{align}\label{1.1}
 ({1}/{2})\,\pounds_{V}\,g +{\rm Ric} = \lambda\,g +\mu\,\eta \otimes\eta\quad\mbox{for some}\ \ \lambda,\mu\in\mathbb{R},
\end{align}
where $\eta$ is a 1-form on~$M$. If $V$ is a Killing vector field, then \eqref{1.1} reduces to an $\eta$-\textit{Einstein metric},
which is defined by
\begin{align}\label{Eq-2.10s}
 {\rm Ric}= a\,g + b\,\eta\otimes\eta\quad\mbox{for some}\ \ a,b\in C^\infty(M).
\end{align}
The questions arise, see~\cite{Blair-survey,cho2009ricci}: How do RS interact with weak $f$-structures? Does a weak metric $f$-manifold
%(e.g., a Kenmotsu $f$-manifold, \cite{kenmotsu1972class,olszak1991normal})
equipped with RS carry Einstein-type~metrics?
In~\cite{rov-127}, $\eta$-RS and $\eta$-Einstein metrics for weak $f$-$K$-contact manifolds were studied.
In this paper, we introduce
%a new structure of this kind, called
weak $\beta$-Kenmotsu $f$-manifolds ($\beta f$-\textbf{KM}), see Definition~\ref{D-wK},
as a generalization of K.~Kenmotsu's concept, and explore their properties and geometrical interpretations.
We study
%the question:
when a weak $\beta f$-KM equipped (it cannot be an Einstein manifold) with an $\eta$-RS structure \eqref{Eq-1.1} carries an $\eta$-Einstein metric \eqref{Eq-2.10} of constant scalar curvature.
%and prove theorems generalizing some results in \cite{RP-1,rov-126} where $s=1$.

\smallskip

The paper consists of an introduction and four sections. In~Section~\ref{sec:01}, we review the basics of the weak metric $f$-structure.
In Sections~\ref{sec:02-f-beta} and \ref{sec:03-beta-R}, we introduce weak $\beta f$-KM ({weak $f$-KM} when $\beta\equiv1$,
and {weak ${\mathcal C}$-manifolds} when $\beta\equiv0$),
derive their fundamental properties (Theorem~\ref{T-2.0}), give their geometrical interpretation in terms of the twisted structure (Theorem~\ref{T-2.1}),
and prove that a weak $\beta f$-KM with $\xi$-parallel Ricci tensor is an $\eta$-Einstein manifold (Theorem~\ref{Th-01}).
In~Section~\ref{sec:03-f-beta}, we study the interaction of weak $\beta f$-KM with $\eta$-RS.
We prove that additional $\eta$-Einstein structure ensures the constancy of the scalar curvature (Theorem~\ref{thm3.1A}).
We then show that if a weak $\beta f$-KM with $\beta=const$ is an $\eta$-RS whose non-zero potential vector field, either a contact vector field (Theorem~\ref{thm3.3}) or collinear with $\sum_i\xi_i$ (Theorem~\ref{thm3.4}), then it is an $\eta$-Einstein manifold.
% of constant scalar curvature.
The~results generalize some theorems in \cite{RP-1,rov-126} where $s=1$ and can be extended to the case $\beta\in C^\infty(M)$.
% is a smooth function on~$M$.

\section{Preliminaries: weak metric $f$-manifolds}
\label{sec:01}

In this section, we review the basics of a weak metric $f$-structure
%(see \cite{rst-43})
as a higher dimensional analog of the weak almost contact metric structure, see \cite{RWo_2,rst-43,rst-137} and Section~5.3.8 in \cite{Rov-Wa-2021}.
%We define weak $f$-contact structures and show that these structures are endowed with totally geodesic~foliations.
%We will use Einstein's summation convention for upper and lower repeating indices.

First, we generalize the notion of framed $f$-structure, see \cite{b1970,gy-1970,yano-1961}, called $f.pk$-structure in \cite{FP07}.

\begin{definition}[see \cite{RWo_2,rst-137}]\label{D-basic}\rm
%A~\textit{weak $f$-structure} on a smooth manifold $M^{2n+s}\ (n>0,\,s>1)$ is given by
%a $(1,1)$-tensor ${f}$ of rank $2\,n$ and a nonsingular $(1,1)$-tensor $Q$ satisfying $f^3+fQ=0$.
A~\textit{weak metric $f$-structure} on a smooth manifold $M^{2n+s}\ (n>0,\,s>1)$ is a set $({f},Q,{\xi_i},{\eta^i},g)$, where
${f}$ is a skew-symmetric $(1,1)$-tensor of rank $2\,n$, $Q$ is a self-adjoint nonsingular $(1,1)$-tensor,
${\xi_i}\ (1\le i\le s)$ are orthonormal
%characteristic
vector fields, ${\eta^i}$ are dual 1-forms, and $g$ is a Riemannian metric on $M$, satisfying
\begin{align}\label{2.1}
 {f}^2 = -Q + \sum\nolimits_{\,i}{\eta^i}\otimes {\xi_i},\quad {\eta^i}({\xi_j})=\delta^i_j,\quad
%\label{2.1Q-nu}
 Q\,{\xi_i} = {\xi_i}, \\
%\end{align}
%\end{definition}
%\begin{definition}\rm
% If there exists a Riemannian metric $g$ on $M^{2n+s}$
% %({f},Q,{\xi_i},{\eta^i})$
% such that
%\begin{align}
\label{2.2}
 g({f} X,{f} Y)= g(X,Q\,Y) -\sum\nolimits_{\,i}{\eta^i}(X)\,{\eta^i}(Y),\quad X,Y\in\mathfrak{X}_M,
\end{align}
%then $({f},Q,{\xi_i},{\eta^i},g)$ is called a {\it weak metric $f$-structure} on $M$,
%$g$ is called a \textit{compatible} metric,
and $M^{2n+s}({f},Q,{\xi_i},{\eta^i},g)$ is called a \textit{weak metric $f$-manifold}.
\end{definition}

%The forms ${\eta^i}$ determine a $2n$-dimensional contact distribution ${\cal D}:=\bigcap_{\,i}\ker{\eta^i}$.
Assume that the distribution ${\cal D}:=\bigcap_{\,i}\ker{\eta^i}$ is ${f}$-invariant,
%(that is true for framed $f$-structure \cite{b1970,gy-1970,yano1984structures}, where $Q={\rm id}_{\,TM}$).
%\begin{align}\label{2.1-D}
% {f} X\in{\cal D}\quad (X\in {\cal D}) .
%\end{align}
%By \eqref{2.1} and \eqref{2.1-D},
%For a weak metric $f$-structure on a manifold $M^{2n+s}$, the tensor ${f}$ has rank $2\,n$,
thus ${\cal D}=f(TM)$, $\dim{\cal D}=2\,n$ and
\[
 {f}\,{\xi_i}=0,\quad {\eta^i}\circ{f}=0,\quad \eta^i\circ Q=\eta^i,\quad [Q,\,{f}]=0 .
\]
By the above,
%$f^3+fQ=0$ holds and
the distribution ${\cal D}^\bot=\ker f$ is spanned by $\{\xi_1,\ldots,\xi_s\}$ and is invariant for $Q$.
%: $Q({\cal D})={\cal D}$.
%Note that $f^3+fQ=0$.
Define a (1,1)-tensor $\tilde Q$ by
%\begin{equation*}%\label{E-tildeQ}
 $Q = {\rm id} + \tilde{Q}$,
%\end{equation*}
and note that $[\tilde{Q},{f}]=0$ and $\eta^i\circ\widetilde Q=0$. We also obtain ${f}^3+{f} = -\tilde{Q}\,{f}$.

%Conditions for the existence of compatible metrics on a framed weak $f$-manifold are given in~\cite{RWo_2}.
%For a weak metric $f$-structure, the tensor
%${f}$ is skew-symmetric and
%$Q$ is self-adjoint.
Putting $Y={\xi_i}$ in \eqref{2.2},
% and using $Q\,{\xi_i}={\xi_i}$,
we get ${\eta^i}(X)=g(X,{\xi_i})$;
thus, each ${\xi_i}$ is orthogonal to ${\cal D}$.
% for any compatible metric $g$.
Therefore, $TM$ splits as complementary orthogonal sum of its subbundles ${\cal D}$ and ${\cal D}^\bot$ -- an {almost product structure}.

A~distribution ${\cal D}^\bot\subset TM$ (integrable or not) is said to be {totally geodesic} if
%and only if
its second fundamental form vanishes: $\nabla_X Y+\nabla_Y X\in{\cal D}^\bot$ for any vector fields $X,Y\in{\cal D}^\bot$ --
%%%%%%%%%%%%%
this is the case when {any geodesic of $M$ that is tangent to ${\cal D}^\bot$ at one point is tangent to ${\cal D}^\bot$ at all its points},
e.g., \cite{Rov-Wa-2021}.
%[Section~1.3.1]
By Frobenius theorem, any involutive distribution is integrable, i.e., it is tangent to the leaves of the foliation.
Any integrable and totally geodesic distribution determines a totally geodesic~foliation.

A weak metric $f$-structure $({f},Q,{\xi_i},{\eta^i})$ is said to be {\it normal} if the following tensor is zero:
\begin{align*}
%\label{2.6X}
 {\cal N}^{\,(1)}(X,Y) = [{f},{f}](X,Y) + 2\sum\nolimits_{\,i}d{\eta^i}(X,Y)\,{\xi_i},\quad X,Y\in\mathfrak{X}_M ,
\end{align*}
where the Nijenhuis torsion
%$[{f},{f}]$
of a (1,1)-tensor ${S}$ and the exterior derivative of a 1-form ${\omega}$ are given~by
\begin{align*}%\label{2.5}
\notag
 & [{S},{S}](X,Y) = {S}^2 [X,Y] + [{S} X, {S} Y] - {S}[{S} X,Y] - {S}[X,{S} Y],\quad X,Y\in\mathfrak{X}_M, \\
%\label{3.3A}
 & d\omega(X,Y) = (1/2)\,\{X({\omega}(Y)) - Y({\omega}(X)) - {\omega}([X,Y])\},\quad X,Y\in\mathfrak{X}_M.
\end{align*}
Using the Levi-Civita connection $\nabla$ of $g$, one can rewrite $[S,S]$ as
%\eqref{2.5} as
\begin{align}\label{4.NN}
 [{S},{S}](X,Y) = ({S}\nabla_Y{S} - \nabla_{{S} Y}{S}) X - ({S}\nabla_X{S} - \nabla_{{S} X}{S}) Y .
\end{align}
%Recall the following formulas with the Lie derivative $\pounds_{Z}$ in the $Z$-direction:
%\begin{align}
%\label{3.3B}
% & (\pounds_{Z}{f})X  = [Z, {f} X] - {f} [Z, X]\quad (X,Y\in\mathfrak{X}_M),\\
% \label{3.3C}
% & (\pounds_{Z}\,\eta^j)(X) = Z(\eta^j(X)) - \eta^j([Z, X]) , \\
% & (\pounds_{Z}\,g)(X,Y) = Z(g(X,Y)) -g([Z,X],Y) - g(X,[Z,Y])
% = g(\nabla_X Z, Y) + g(\nabla_Y Z, X).
%\end{align}
The following tensors, see \cite{rst-43, rov-127}:
${\cal N}^{\,(2)}_i, {\cal N}^{\,(3)}_i$ and ${\cal N}^{\,(4)}_{ij}$,
%framed weak
are well known for metric $f$-manifolds,~\cite{b1970}:
\begin{align*}
%\label{2.7X}
 {\cal N}^{\,(2)}_i(X,Y) &= (\pounds_{{f} X}\,{\eta^i})(Y) - (\pounds_{{f} Y}\,{\eta^i})(X)
 %\overset{\eqref{3.3A}}
 =2\,d{\eta^i}({f} X,Y) - 2\,d{\eta^i}({f} Y,X) ,  \\
%\notag
%\label{2.8X}
 {\cal N}^{\,(3)}_i(X) &= (\pounds_{{\xi_i}}{f})X
 %\overset{\eqref{3.3B}}
 = [{\xi_i}, {f} X] - {f} [{\xi_i}, X],\\
%\label{2.9X}
 {\cal N}^{\,(4)}_{ij}(X) &= (\pounds_{{\xi_i}}\,{\eta^j})(X)
%\overset{\eqref{3.3C}}
 = {\xi_i}({\eta^j}(X)) - {\eta^j}([{\xi_i}, X])
 %\overset{\eqref{3.3A}}
 = 2\,d{\eta^j}({\xi_i}, X) .
\end{align*}

\begin{remark}
%[see \cite{r-23}]
\rm
Let $M^{2n+s}(f,Q,\xi_i,\eta^i,g)$ be a
%framed weak
weak metric $f$-manifold.
Consider the pro\-duct manifold $\bar M = M^{2n+s}\times\mathbb{R}^s$,
where $\mathbb{R}^s$ is a Euclidean space with a basis $\partial_1,\ldots,\partial_s$.
Define tensor $J$ on $\bar M$ by
%\begin{align*}
 $J(X, \sum\nolimits_{\,i}a^i\partial_i) = (fX - \sum\nolimits_{\,i}a^i\xi_i, \sum\nolimits_{\,j}\eta^j(X)\partial_j)$,
% and
% $-J^2(X, \sum\nolimits_{\,i}a^i\partial_i) = (QX, \sum\nolimits_{\,i}a^i\partial_i)$,
%\end{align*}
where $a_i\in C^\infty(M)$.
%Hence, $J(X,0)=(fX,0)$, $\bar Q(X,0)=(QX,0)$ for $X\in{\cal D}$,
%$J(\xi_i,0)=(0,\partial_i)$, $\bar Q(\xi_i,0)=(\xi_i,0)$ and
%$J(0,\partial_i)=(-\xi_i,0)$, $\bar Q(0,\partial_i)=(0,\partial_i)$.
%It can be verified that $J^{\,2}=-\bar Q$.
Tensors ${\cal N}^{\,(1)}, {\cal N}^{\,(2)}_i, {\cal N}^{\,(3)}_i, {\cal N}^{\,(4)}_{ij}$ appear from the
 %integrability
 condition $[J, J]=0$, when we express the normality condition ${\cal N}^{\,(1)}=0$.
\end{remark}

\begin{proposition}[see \cite{rst-43}]
The normality condition ${\cal N}^{\,(1)}=0$ for a weak metric $f$-structure implies
\begin{align}\label{Eq-normal}
% & \pounds_{\xi_i}\eta^j=0,\quad \pounds_{\xi_i}{f}=0,\quad [\xi_i, \xi_j] = 0,\\
 & {\cal N}^{\,(3)}_i = {\cal N}^{\,(4)}_{ij} = 0,\quad {\cal N}^{\,(2)}_i(X,Y) = \eta^i([\widetilde QX, fY]), \\
\label{Eq-normal-2}
 & \nabla_{\xi_i}\,\xi_j\in{\cal D},\quad  [X,\xi_i]\in{\cal D}\quad
% (1\le i,j\le s,\quad
 (X\in{\cal D}).
\end{align}
%where $\widetilde Q = Q - {\rm id}$.
In this case,
%${\cal D}$ is tangent to a foliation and
${\cal D}^\bot$ is a totally geodesic distribution.
%is tangent to a totally geodesic foliation.
\end{proposition}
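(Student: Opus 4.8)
The plan is to squeeze the three classical satellite tensors out of the single hypothesis ${\cal N}^{(1)}=0$ by testing it on the characteristic directions and contracting with the forms $\eta^j$, and then to read off the geometric consequences for $\nabla$. The computational engine is one contraction identity: since $\eta^j\circ f=0$, the two $f$-headed terms of $[f,f](X,Y)$ die under $\eta^j$, and $\eta^j\circ f^2=0$ by $f^2=-Q+\eta^k\otimes\xi_k$ together with $\eta^j\circ Q=\eta^j$; hence
\[
 \eta^j({\cal N}^{(1)}(X,Y))=\eta^j([fX,fY])+2\,d\eta^j(X,Y)=-2\,d\eta^j(fX,fY)+2\,d\eta^j(X,Y).
\]

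First I would put $X=\xi_i$: as $f\xi_i=0$ the first term drops, leaving $\eta^j({\cal N}^{(1)}(\xi_i,Y))=2\,d\eta^j(\xi_i,Y)={\cal N}^{(4)}_{ij}(Y)$, so ${\cal N}^{(1)}=0$ forces ${\cal N}^{(4)}_{ij}=0$. For ${\cal N}^{(3)}_i$ I would instead keep the full vector: a direct expansion using $f\xi_i=0$ yields ${\cal N}^{(1)}(\xi_i,X)=-f\,{\cal N}^{(3)}_i(X)+{\cal N}^{(4)}_{ik}(X)\,\xi_k$, whence (with ${\cal N}^{(4)}=0$) $f\,{\cal N}^{(3)}_i(X)=0$, i.e. ${\cal N}^{(3)}_i(X)\in\ker f={\cal D}^\bot$; separately, $\eta^j({\cal N}^{(3)}_i(X))=\eta^j([\xi_i,fX])=-{\cal N}^{(4)}_{ij}(fX)=0$ places ${\cal N}^{(3)}_i(X)$ in ${\cal D}$, and ${\cal D}\cap{\cal D}^\bot=0$ forces ${\cal N}^{(3)}_i=0$. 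The formula for ${\cal N}^{(2)}_i$ comes from contracting ${\cal N}^{(1)}(fX,Y)$: using $f^2=-Q+\eta^k\otimes\xi_k$ and $d\eta^i(\xi_k,fY)=\tfrac12{\cal N}^{(4)}_{ki}(fY)=0$, the identity above specializes to $d\eta^i(fX,Y)=d\eta^i(fY,QX)$; substituting this into the definition ${\cal N}^{(2)}_i(X,Y)=2\,d\eta^i(fX,Y)-2\,d\eta^i(fY,X)$ collapses it to $2\,d\eta^i(fY,(Q-{\rm id})X)=\eta^i([\widetilde QX,fY])$, where $\widetilde Q:=Q-{\rm id}$.

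The remaining claims are geometric. From ${\cal N}^{(4)}_{ij}=0$ and $2\,d\eta^j(\xi_i,Z)=-\eta^j([\xi_i,Z])$ (the terms $\xi_i(\eta^j(Z))$ and $Z(\eta^j(\xi_i))$ vanish when $Z\in{\cal D}$ or $Z=\xi_l$) I obtain $[X,\xi_i]\in{\cal D}$ for $X\in{\cal D}$ and $[\xi_i,\xi_j]\in{\cal D}$. To upgrade the latter to $\nabla_{\xi_i}\xi_j\in{\cal D}$ I would examine $A_{ijk}:=g(\nabla_{\xi_i}\xi_j,\xi_k)$: orthonormality of $\{\xi_i\}$ (so $g(\xi_j,\xi_k)$ is constant) makes $A_{ijk}$ skew in $(j,k)$, while $[\xi_i,\xi_j]\in{\cal D}$ with $\nabla_{\xi_i}\xi_j-\nabla_{\xi_j}\xi_i=[\xi_i,\xi_j]$ makes it symmetric in $(i,j)$; the usual permutation loop $A_{ijk}=A_{jik}=-A_{jki}=\ldots=-A_{ijk}$ then gives $A_{ijk}=0$, i.e. $\nabla_{\xi_i}\xi_j\in{\cal D}$. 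In particular the ${\cal D}^\bot$-component of $\nabla_{\xi_i}\xi_j+\nabla_{\xi_j}\xi_i$ vanishes, which is precisely the vanishing of the second fundamental form of ${\cal D}^\bot$, so ${\cal D}^\bot$ is totally geodesic.

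The step I expect to be the real obstacle is the ${\cal N}^{(2)}_i$ identity: in the classical framed $f$-setting this tensor simply vanishes, whereas here the non-classical factor $Q$ survives and must be repackaged exactly as the $\widetilde Q=Q-{\rm id}$ correction. Keeping straight which slot carries $f$ and which carries $\widetilde Q$, and exploiting antisymmetry of $d\eta^i$ to land on $\eta^i([\widetilde QX,fY])$ rather than a cosmetically different but equal expression, is the only delicate bookkeeping; the $\xi_i$-substitutions and the $A_{ijk}$ symmetry argument are short and sign-robust.
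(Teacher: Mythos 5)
The paper itself gives no proof of this proposition (it is imported from \cite{rst-43}), so your derivation has to stand on its own, and its algebraic core does: the contraction identity $\eta^j(\mathcal{N}^{(1)}(X,Y))=-2\,d\eta^j(fX,fY)+2\,d\eta^j(X,Y)$ is correct (the two $f$-headed terms die because $\eta^j\circ f=0$, and $\eta^j\circ f^2=0$ because $\eta^j\circ Q=\eta^j$), and the three specializations you extract from it all check out --- $\mathcal{N}^{(4)}_{ij}=0$ from $X=\xi_i$; the expansion $\mathcal{N}^{(1)}(\xi_i,X)=-f\,\mathcal{N}^{(3)}_i(X)+\mathcal{N}^{(4)}_{ik}(X)\,\xi_k$, which places $\mathcal{N}^{(3)}_i(X)$ in $\ker f\cap\mathcal{D}=\{0\}$; and $d\eta^i(fX,Y)=d\eta^i(fY,QX)$, which collapses $\mathcal{N}^{(2)}_i(X,Y)$ to $2\,d\eta^i(fY,\widetilde QX)=\eta^i([\widetilde QX,fY])$ with $\widetilde Q=Q-\mathrm{id}$ (using $\eta^i\circ\widetilde Q=0$). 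The same is true of $[X,\xi_i]\in\mathcal{D}$ and of the permutation argument $A_{ijk}=0$, which correctly yields $\nabla_{\xi_i}\xi_j\in\mathcal{D}$.

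The one genuine error is your last sentence. The second fundamental form of $\mathcal{D}^\bot$ takes values in the orthogonal complement $\mathcal{D}$, so --- exactly as the paper's definition states --- total geodesy of $\mathcal{D}^\bot$ means $\nabla_{\xi_i}\xi_j+\nabla_{\xi_j}\xi_i\in\mathcal{D}^\bot$, i.e., the vanishing of the $\mathcal{D}$-component of the symmetrized derivative. Your $A_{ijk}$ argument controls only the $\mathcal{D}^\bot$-component; having just proved that $\nabla_{\xi_i}\xi_j$ lies entirely in $\mathcal{D}$, you cannot read total geodesy off the vanishing of its $\mathcal{D}^\bot$-part --- that is the tangential part, not the second fundamental form. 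The missing step is short and uses only facts you already have: for a section $X$ of $\mathcal{D}$ every derivative term in the Koszul formula vanishes ($g(\xi_j,X)\equiv0$, $g(\xi_i,\xi_j)=\delta_{ij}$), and the bracket terms $g([\xi_j,X],\xi_i)$, $g([X,\xi_i],\xi_j)$ vanish because $[\xi_k,X]\in\mathcal{D}$, leaving
\begin{align*}
 2\,g(\nabla_{\xi_i}\xi_j,X)=g([\xi_i,\xi_j],X),
\end{align*}
whose right-hand side is skew-symmetric in $(i,j)$; hence the $\mathcal{D}$-component of $\nabla_{\xi_i}\xi_j+\nabla_{\xi_j}\xi_i$ vanishes and $\mathcal{D}^\bot$ is totally geodesic. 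Combined with $A_{ijk}=0$ this in fact gives $\nabla_{\xi_i}\xi_j+\nabla_{\xi_j}\xi_i=0$, which completes the proof.
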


The {fundamental $2$-form} $\Phi$ on $M^{2n+s}({f},Q,\xi_i,\eta^i,g)$ is defined by
%\[
 $\Phi(X,Y)=g(X,{f} Y)$ for all $X,Y\in\mathfrak{X}_M$.
%\]
Recall the co-boundary formula for exterior derivative $d$ on a $2$-form $\Phi$,
\begin{align}\label{E-3.3}
 3\,d\Phi(X,Y,Z) &=
 %\frac{1}{3}\,\big\{
 X\,\Phi(Y,Z) + Y\,\Phi(Z,X) + Z\,\Phi(X,Y) \notag\\
 &-\Phi([X,Y],Z) - \Phi([Z,X],Y) - \Phi([Y,Z],X) .
% \big\}.
\end{align}

\begin{proposition}[see \cite{rst-43}]
%\label{lem6.1}
For a weak metric $f$-structure
%$({f},Q,\xi_i,\eta^i,g)$,
we get
\begin{align}\label{3.1-new}
\notag
 & 2\,g((\nabla_{X}{f})Y,Z) = 3\,d\Phi(X,{f} Y,{f} Z) - 3\, d\Phi(X,Y,Z) + g({\cal N}^{\,(1)}(Y,Z),{f} X)\notag\\
%\notag
 & +\sum\nolimits_{\,i}\big({\cal N}^{\,(2)}_i(Y,Z)\,\eta^i(X) + 2\,d\eta^i({f} Y,X)\,\eta^i(Z) - 2\,d\eta^i({f} Z,X)\,\eta^i(Y)\big)
 %\notag\\ &
 + {\cal N}^{\,(5)}(X,Y,Z),
\end{align}
where ${\cal N}^{\,(5)}$ is the tensor field acting as
%a skew-symmet\-ric with respect to $Y$ and $Z$
%tensor ${\cal N}^{\,(5)}(X,Y,Z)$ is defined by
\begin{align*}
 & {\cal N}^{\,(5)}(X,Y,Z) = {f} Z\,(g(X, \widetilde QY)) - {f} Y\,(g(X, \widetilde QZ)) \\
 & + g([X, {f} Z], \widetilde QY) - g([X,{f} Y], \widetilde QZ) + g([Y,{f} Z] -[Z, {f} Y] - {f}[Y,Z],\ \widetilde Q X).
\end{align*}
\end{proposition}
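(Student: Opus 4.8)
The plan is to imitate the classical Koszul-type derivation of the covariant derivative of the structure tensor (as for almost contact metric manifolds), while carefully isolating the extra terms produced by the weak structure, i.e. by $Q\neq\mathrm{id}$. First I would record two elementary consequences of $\nabla g=0$ and the skew-symmetry of $f$ coming from \eqref{2.2}. Writing $a(X,Y,Z):=g((\nabla_X f)Y,Z)$, the tensor $a$ is skew-symmetric in $Y,Z$, and a direct computation gives $(\nabla_X\Phi)(Y,Z)=g(Y,(\nabla_X f)Z)=-a(X,Y,Z)$. Substituting $[X,Y]=\nabla_XY-\nabla_YX$ into the co-boundary formula \eqref{E-3.3} and using $\nabla g=0$, all terms of the form $\Phi(\nabla_\bullet\bullet,\bullet)$ cancel in pairs, leaving the cyclic identity
\[
 3\,d\Phi(X,Y,Z)=-\big(a(X,Y,Z)+a(Y,Z,X)+a(Z,X,Y)\big).
\]
This expresses only the totally antisymmetric part of $\nabla\Phi$; to isolate the single term $a(X,Y,Z)$ I need a second, independent relation, and this is exactly what the Nijenhuis-type tensors supply.

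Next I would evaluate $3\,d\Phi(X,{f}Y,{f}Z)$ by the same co-boundary formula. The crucial point is the handling of $\Phi({f}Y,{f}Z)=g({f}Y,{f}^2 Z)$: using \eqref{2.1} I replace ${f}^2Z=-QZ+\eta^i(Z)\,\xi_i$, which splits every such contribution into a \emph{classical} piece (the part with $\mathrm{id}$ in place of $Q$) and a remainder carrying $\widetilde Q$. Simultaneously I would rewrite $g({\cal N}^{(1)}(Y,Z),{f}X)$ in covariant form by means of \eqref{4.NN}, so that ${\cal N}^{(1)}$ is expressed through $\nabla {f}$, and reduce the $\xi_i$-components of $[{f},{f}](Y,Z)+2\,d\eta^i(Y,Z)\,\xi_i$ to the tensors ${\cal N}^{(2)}_i$ and $d\eta^i$ appearing in the statement.

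Then comes the algebraic heart of the argument: form the combination $3\,d\Phi(X,{f}Y,{f}Z)-3\,d\Phi(X,Y,Z)+g({\cal N}^{(1)}(Y,Z),{f}X)$ together with the $\eta^i$- and $d\eta^i$-terms, and simplify using the cyclic identity above, the skew-symmetry of $a$, and \eqref{2.2}. The $a$-terms reorganize so that $a(X,Y,Z)$ survives with coefficient $2$, reproducing $2\,g((\nabla_X {f})Y,Z)$, while every term that in the classical case $Q=\mathrm{id}$ would cancel now collects, by construction, into the $\widetilde Q$-dependent expression ${\cal N}^{(5)}(X,Y,Z)$. To finish I would check that this leftover is precisely the stated ${\cal N}^{(5)}$ and that it is skew-symmetric in $Y,Z$, as asserted.

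The main obstacle is bookkeeping rather than conceptual: keeping track of the large number of covariant-derivative and bracket terms and, above all, routing each occurrence of ${f}^2$ through \eqref{2.1} so that its $Q$-part lands correctly in ${\cal N}^{(5)}$ while its $\eta^i\otimes\xi_i$-part merges with the $d\eta^i$- and ${\cal N}^{(2)}_i$-terms. A secondary subtlety is sign and normalization consistency caused by the $\tfrac12$-convention for $d$ in \eqref{2.5}, which is what forces the factors $3$ and $2$ in the final formula; I would fix these once at the outset to avoid accumulating errors across the many terms.
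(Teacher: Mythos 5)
First, note that the paper does not actually prove this proposition: it is imported verbatim from \cite{rst-43} (the reference in the proposition's heading), so there is no in-paper argument to compare yours against. Judged on its own terms, your strategy is the right one and is essentially the derivation used for this identity in the literature: the Koszul-type computation of Blair for almost contact metric structures, adapted to the weak case by routing every occurrence of $f^2$ through \eqref{2.1}, converting $[f,f]$ to covariant form via \eqref{4.NN}, and letting the non-classical part $\widetilde Q=Q-{\rm id}$ (a notation you should state explicitly, since the paper uses $\widetilde Q$ without defining it) accumulate into ${\cal N}^{(5)}$. Your preliminary identities are also correct: skew-symmetry of $a(X,Y,Z)=g((\nabla_Xf)Y,Z)$ in $Y,Z$, the relation $(\nabla_X\Phi)(Y,Z)=-a(X,Y,Z)$, and the cyclic formula for $3\,d\Phi$ coming from \eqref{E-3.3}, including the normalization issues caused by the $\tfrac12$ in \eqref{2.5}.

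The reservation is that what you have written is a plan, not a proof. The entire content of the proposition is the \emph{exact} form of the residual tensor ${\cal N}^{(5)}$ --- that the $\widetilde Q$-terms assemble precisely into $fZ(g(X,\widetilde QY))-fY(g(X,\widetilde QZ))+g([X,fZ],\widetilde QY)-g([X,fY],\widetilde QZ)+g([Y,fZ]-[Z,fY]-f[Y,Z],\widetilde QX)$ and into the coefficient $2$ in front of $g((\nabla_Xf)Y,Z)$ --- and this is exactly the step you defer to ``bookkeeping'' and a final check. Nothing in your outline would fail (e.g., terms such as $\Phi([X,fY],fZ)=g([X,fY],f^2Z)=-g([X,fY],QZ)+\eta^i(Z)\,\eta^i([X,fY])$ do split as you describe, with the $-g([X,fY],\widetilde QZ)$ piece landing in ${\cal N}^{(5)}$ and the $\eta^i\otimes\xi_i$ piece feeding the $d\eta^i$ and ${\cal N}^{(2)}_i$ terms), but until that expansion is carried out and the residual is matched term by term against the stated ${\cal N}^{(5)}$, the proposition has not been established. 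So: correct approach, incomplete execution.
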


%%%%%%%%%%%%%%%%%%%%%%%%%%%%%%%%%%%%%%%%%%%%%%%%%
%ghosh2019ricci

Let $\Ric^\sharp$ be a~(1,1)-tensor adjoint to the {Ricci tensor}
%${\rm Ric}$
-- the suitable trace of the curvature tensor:
%$R_{{X},{Y}}Z=\nabla_X\nabla_Y Z -\nabla_Y\nabla_X Z -\nabla_{[X,Y]} Z$:
\begin{equation*}
%\label{eq:ricci}
 {\rm Ric}\,(X,Y) = {\rm trace}_{\,g}(Z\to R_{\,Z,X}\,Y),\quad
 R_{X, Y} = [\nabla_{X},\nabla_{Y}] - \nabla_{[X,Y]}\quad (X,Y,Z\in\mathfrak{X}_M).
%  = \sum\nolimits_{\,i} g(R_{\,E_i,X}\,Y, E_i)
\end{equation*}
The scalar curvature of a Riemannian manifold $(M,g)$ is defined as ${r}={\rm trace}_{\,g}\Ric={\rm trace}\Ric^\sharp$.
%The curvature tensor is given by  $R_{X, Y} = [\nabla_{X},\nabla_{Y}] - \nabla_{[X,Y]}\ (X,Y\in\mathfrak{X}_M)$,
%and ${\rm Ric}({X},{Y})={\rm trace}(Z\to R_{Z,X}Y)$ is the Ricci tensor. The Ricci operator $\Ric^\sharp$ is given by
% $g(\Ric^\sharp X,Y)={\rm Ric}(X,Y)$. The scalar curvature of $g$ is given by $r={\rm trace}_g \Ric$.

The following formulas are well known, e.g., \cite[Eqs. (6) and (7)]{ghosh2019ricci}:
% and \cite[p.~23]{yano1970integral}:
\begin{align}
\label{ff}
 & (\pounds_V\nabla)(X,Y) = \nabla_X\nabla_Y V - \nabla_{\nabla_X Y}V + R_{V,X} Y ,\\
\label{3.21}
 & (\pounds_{V}\nabla)(X, Y) = \pounds_{V}(\nabla_{X} Y) - \nabla_{X}\,(\pounds_{V} Y) - \nabla_{\pounds_{V} X}Y
 %\quad (X,Y\in\mathfrak{X}_M)
 , \\
\label{Eq-6}
 & (\nabla_X\pounds_V\,g)(Y,Z) = g((\pounds_V\nabla)(X,Y), Z) + g((\pounds_V\nabla)(X,Z), Y), \\
%%%% Recall the commutation formula with the tensor $\pounds_{V}\nabla$, see \cite[p.~23]{yano1970integral},
\label{2.6}
 & (\pounds_{V}\nabla_{Z}\,g - \nabla_{Z}\,\pounds_{V}\,g - \nabla_{[V,Z]}\,g)(X,Y)
  = -g((\pounds_{V}\nabla)(Z,X),Y) -g((\pounds_{V}\nabla)(Z,Y),X), \\
%%%%%
\label{Eq-7}
 & (\pounds_V\,R)_{X,Y} Z = (\nabla_X\pounds_V\nabla)(Y,Z) - (\nabla_Y\pounds_V\nabla)(X,Z).
\end{align}

\begin{definition}[see \cite{rov-127}]
\rm
A weak metric $f$-manifold
%$M^{2n+s}({f},Q,{\xi_i},{\eta^i},g)$
(see Definition~\ref{D-basic})  is said to be \textit{$\eta$-Einstein},~if
\begin{align}\label{Eq-2.10}
% \Ric = a\,g + b\sum\nolimits_{\,i}\eta^i\otimes\eta^i + (a+b)\sum\nolimits_{\,i\ne j}\eta^i\otimes\eta^j\ \
  \Ric = a\,g - a\sum\nolimits_{\,i}\eta^i\otimes\eta^i + (a+b)\,\bar\eta\otimes\bar\eta\quad
 \mbox{for some}\ \ a,b\in C^\infty(M),
\end{align}
where $\bar\xi=\sum\nolimits_{\,i}\xi_i$ and $\bar\eta=\sum\nolimits_{\,i}\eta^i$.
An \textit{$\eta$-\textit{RS}} on a weak metric $f$-manifold $M^{2n+s}({f},Q,{\xi_i},{\eta^i},g)$ is defined~by
%satisfying
\begin{align}\label{Eq-1.1}
% (1/2)\,\pounds_V\,g + \Ric = \lambda\,g +\mu\sum\nolimits_{\,i}\eta^i\otimes\eta^i +(\lambda+\mu)\sum\nolimits_{\,i\ne j}\eta^i\otimes\eta^j ,
 (1/2)\,\pounds_V\,g + \Ric = \lambda\,g -\lambda\sum\nolimits_{\,i}\eta^i\otimes\eta^i +(\lambda+\mu)\bar\eta\otimes\bar\eta ,
 %\quad \lambda,\mu\in\mathbb{R},
\end{align}
where $V$ is a smooth vector field on $M$ and $\lambda,\mu$ are real constants.
% and  $\lambda, \mu\in C^\infty(M)$.
%Here ${\rm Ric}$ is the Ricci tensor, $\pounds_{V}$ is the Lie derivative in the $V$-direction.
%, see \cite{rov-127}.
%The tensor product notation $(\eta^i\otimes\eta^j)(X,Y)=\eta^i(X)\,\eta^j(Y)$ was also used.
\end{definition}

%\begin{remark}\rm
If $V$ is a Killing vector field, i.e., $\pounds_V\,g=0$, then \eqref{Eq-1.1} reduces to \eqref{Eq-2.10} with $a=\lambda$ and $b=\mu$.
Taking the trace of \eqref{Eq-2.10}, gives the
%constant
scalar curvature $r = (2\,n + s)\,a + s\,b$.
% of $(M,g)$.
%%
For $s=1$ and $Q={\rm id}$, the definitions \eqref{Eq-1.1} and \eqref{Eq-2.10} are well-known for almost contact metric~manifolds:
%Let $s=1$ and $Q={\rm id}$. Then
%For $s=1$,
\eqref{Eq-1.1}~reduces to an $\eta$-RS \eqref{1.1},
%$\frac12\,\pounds_V\,g + \Ric = \lambda\,g + \mu\,\eta\otimes\eta$
and \eqref{Eq-2.10} reduces to an $\eta$-Einstein metric \eqref{Eq-2.10s}.
%$\Ric = a\,g + b\,\eta\otimes\eta$.
%e.g.,~\cite{G-2023}.
%\end{remark}

\section{Geometry of weak $\beta f$-KM}
\label{sec:02-f-beta}

In the following definition, we generalize the notions of $\beta$-KM ($s=1$), $f$-KM ($\beta=1,\ s>1$), see \cite{FP06,FP07,ghosh2019ricci,SV-2016}, and weak $\beta$-KM ($s=1$), see \cite{rov-126}.

\begin{definition}\label{D-wK}
\rm
A
%%normal (i.e., ${\cal N}^{\,(1)} = 0$)
weak metric $f$-manifold $M^{2n+s}({f},Q,\xi_i,\eta^i,g)$
will be called a \textit{weak $\beta f$-KM} (a \textit{weak $f$-KM}) when $\beta\equiv1)$,
%and a \textit{weak ${\mathcal C}$-manifold} when $\beta\equiv0$)
if
%${\cal N}^{\,(1)} = 0$ and
\begin{align}\label{2.3-f-beta}
 (\nabla_{X}\,{f})Y=\beta\{g({f} X, Y)\,\bar\xi -\bar\eta(Y){f} X\}\quad (X,Y\in\mathfrak{X}_M),
\end{align}
where $\beta\in C^\infty(M)$.
If $\beta\equiv0$, then \eqref{2.3-f-beta} defines a \textit{weak ${\mathcal C}$-manifold}.
\end{definition}

Note that $\bar\eta(\xi_i)=\eta^i(\bar\xi)=1$ and $\bar\eta(\bar\xi)=s$.
Taking $X=\xi_j$ in \eqref{2.3-f-beta} and using ${f}\,\xi_j=0$, we get $\nabla_{\xi_j}{f}=0$, which implies
%$f(\nabla_{\xi_i}\,\xi_j)=0$, and so
$\nabla_{\xi_i}\,\xi_j\perp {\cal D}$. This and the 1st equality in \eqref{Eq-normal-2} give
\begin{align}\label{Eq-normal-3}
 \nabla_{\xi_i}\,\xi_j =0\quad  (1\le i,j\le s),
\end{align}
thus, ${\cal D}^\bot$ of a weak $\beta f$-KM is tangent to a foliation with flat totally geodesic~leaves.

\begin{lemma}\label{Lem-1}
For a weak $\beta f$-KM %$M^{2n+s}({f},Q,\xi_i,\eta^i,g)$,
the following formulas are true:
\begin{align}\label{2.3b}
 & \nabla_{X}\,\xi_i = \beta\{X -\sum\nolimits_{\,j}\eta^j(X)\,\xi_j\}\quad (1\le i\le s,\quad X\in\mathfrak{X}_M),\\
 \label{2.3c}
 & (\nabla_{X}\,\eta^i)(Y) = \beta\{g(X,Y) -\sum\nolimits_{\,j}\eta^j(X)\,\eta^j(Y)\}\quad (1\le i\le s,\quad X,Y\in\mathfrak{X}_M) .
\end{align}
\end{lemma}

\begin{proof}
Taking $Y=\xi_i$ in \eqref{2.3-f-beta} and using $g({f} X, \xi_i)=0$ and $\bar\eta(\xi_i)=1$, we get ${f}(\nabla_X\,\xi_i -\beta X)=0$.
Since ${f}$ is non-degenerate on ${\cal D}$ and has rank $2\,n$, we get $\nabla_X\,\xi_i -\beta X = \sum\nolimits_{\,p}c^p\xi_p$.
The inner~product with $\xi_j$ gives $g(\nabla_X\,\xi_i,\xi_j) = \beta\,g(X,\xi_j) - c^j$.
Using \eqref{Eq-normal-2} and \eqref{Eq-normal-3}, we find $g(\nabla_X\,\xi_i,\xi_j)=g(\nabla_{\xi_i}X,\xi_j)=0$; hence, $c^j=\beta\,\eta^j(X)$.
This proves~\eqref{2.3b}. Using $(\nabla_{X}\,\eta^i)(Y) = g(\nabla_{X}\,\xi_i, Y)$ and \eqref{2.3b}, we get \eqref{2.3c}.
\end{proof}

The following result generalizes Theorem~3.4 in \cite{SV-2016}.

\begin{theorem}\label{T-2.0}
A weak metric $f$-manifold
%$M^{2n+s}({f},Q,\xi_i,\eta^i,g)$
is a weak $\beta f$-KM if and only if the following conditions are valid:
%it is
%with $n>1$ is a
%normal, i.e.,
%${\cal N}^{\,(1)} = 0$,
%, weak metric $f$-manifold satisfying conditions
\begin{align}\label{Eq-almost-K}
 {\cal N}^{\,(1)} = 0,\quad
 d\eta^i = 0,\quad
 d\Phi = 2\,\beta\,\bar\eta\wedge\Phi,\quad
 {\cal N}^{\,(5)}(X,Y,Z)=2\,\beta\,\bar\eta(X) g(f Y, \widetilde Q Z).
\end{align}
% and the normality condition ${\cal N}^{\,(1)} = 0$.
\end{theorem}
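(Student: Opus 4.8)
The proof rests on the universal identity \eqref{3.1-new}, valid for \emph{any} weak metric $f$-structure, which expresses $2\,g((\nabla_X f)Y,Z)$ in terms of $d\Phi$, the normality tensors ${\cal N}^{(1)},{\cal N}^{(2)}_i$, the forms $d\eta^i$, and ${\cal N}^{(5)}$. The plan is to prove both implications by feeding the available data into this single formula and reading off the remaining term. The observation that drives everything is that ${\cal N}^{(2)}_i$ is harmless here: since ${\cal N}^{(2)}_i(X,Y)=2\,d\eta^i(fX,Y)-2\,d\eta^i(fY,X)$, the condition $d\eta^i=0$ forces ${\cal N}^{(2)}_i=0$. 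Hence, once ${\cal N}^{(1)}=0$ and $d\eta^i=0$ are in hand, \eqref{3.1-new} collapses to
\[
 2\,g((\nabla_X f)Y,Z)=3\,d\Phi(X,fY,fZ)-3\,d\Phi(X,Y,Z)+{\cal N}^{(5)}(X,Y,Z).
\]
Two algebraic facts are used repeatedly: $\bar\eta\circ f=0$, and, from \eqref{2.1}, $\Phi(fY,fZ)=g(fY,f^2Z)=-g(fY,QZ)$, because $g(fY,\xi_i)=0$.

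For the forward implication, assume $M$ is weak $\beta$-Kenmotsu. Then ${\cal N}^{(1)}=0$ holds by definition. Next I would compute $d\eta^i$ from \eqref{2.3b}: since $\eta^i(\cdot)=g(\cdot,\xi_i)$ and $\nabla$ is metric, $2\,d\eta^i(X,Y)=g(\nabla_X\xi_i,Y)-g(\nabla_Y\xi_i,X)$, and \eqref{2.3b} makes $g(\nabla_X\xi_i,Y)=\beta\{g(X,Y)-\eta^j(X)\eta^j(Y)\}$ symmetric in $X,Y$, so $d\eta^i=0$ (whence ${\cal N}^{(2)}_i=0$). For $d\Phi$ I would use the Levi-Civita coboundary identity $3\,d\Phi(X,Y,Z)=\sum_{\mathrm{cyc}}(\nabla_X\Phi)(Y,Z)$ with $(\nabla_X\Phi)(Y,Z)=g(Y,(\nabla_X f)Z)$ and \eqref{2.3-f-beta}; a short cyclic sum gives the third condition $d\Phi=2\beta\,\bar\eta\wedge\Phi$, which enters the master formula as $3\,d\Phi(X,Y,Z)=2\beta\,(\bar\eta\wedge\Phi)(X,Y,Z)$. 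Finally, substituting the known $\nabla f$, the value of $d\Phi$, and $\Phi(fY,fZ)=-g(fY,QZ)$ into the collapsed formula and solving for ${\cal N}^{(5)}$, the terms proportional to $g(fX,Y)\bar\eta(Z)$ and to $\bar\eta(Y)g(fX,Z)$ cancel, leaving exactly $2\beta\,\bar\eta(X)\{g(fY,QZ)-g(fY,Z)\}=2\beta\,\bar\eta(X)\,g(fY,\widetilde QZ)$ with $\widetilde Q=Q-\operatorname{id}$, which is the fourth condition.

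For the converse, assume the four conditions \eqref{Eq-almost-K}. Normality is immediate from ${\cal N}^{(1)}=0$, and $d\eta^i=0$ again yields ${\cal N}^{(2)}_i=0$, so \eqref{3.1-new} reduces to the three-term form above. I would then substitute $3\,d\Phi=2\beta\,\bar\eta\wedge\Phi$ and the prescribed ${\cal N}^{(5)}$, expand using $\bar\eta\circ f=0$, $\Phi(fY,fZ)=-g(fY,QZ)$ and $\widetilde Q=Q-\operatorname{id}$: the $\bar\eta(X)$-terms (from $d\Phi(X,fY,fZ)$, the $\bar\eta(X)\Phi(Y,Z)$ part of $d\Phi(X,Y,Z)$, and ${\cal N}^{(5)}$) cancel identically, and what survives is $2\beta\{g(fX,Y)\bar\eta(Z)-\bar\eta(Y)g(fX,Z)\}$. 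Since this equals $2\,g(\beta\{g(fX,Y)\bar\xi-\bar\eta(Y)fX\},Z)$ for every $Z$ and $g$ is nondegenerate, we recover \eqref{2.3-f-beta}, so $M$ is weak $\beta$-Kenmotsu.

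The main difficulty is not a single hard estimate but the disciplined bookkeeping of the $Q$-dependent terms peculiar to the weak setting. The two genuinely essential points are (i) recognizing that $d\eta^i=0$ annihilates ${\cal N}^{(2)}_i$ through its very definition, thereby removing the one term in \eqref{3.1-new} that would otherwise obstruct the clean form \eqref{2.3-f-beta}; and (ii) tracking the single place where $Q$ enters, namely $\Phi(fY,fZ)=-g(fY,QZ)$, and checking that it combines with $-g(fY,Z)$ to produce precisely $\widetilde Q=Q-\operatorname{id}$ in ${\cal N}^{(5)}$. Splitting the verification into the cases $X\in{\cal D}$ (where the ${\cal N}^{(5)}$ and all $\bar\eta(X)$-terms drop out) and $X\in{\cal D}^\bot$ renders both cancellations transparent.
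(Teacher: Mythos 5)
Your proposal is correct and follows essentially the same route as the paper: both directions hinge on substituting the data into the master identity \eqref{3.1-new}, with $d\eta^i=0$ obtained from the symmetry of $(\nabla_X\eta^i)Y$ via \eqref{2.3b}, $d\Phi=2\beta\,\bar\eta\wedge\Phi$ from the coboundary formula, and ${\cal N}^{\,(5)}$ read off (or fed back in) after the ${\cal N}^{\,(2)}_i$- and $d\eta^i$-terms drop out. The only cosmetic differences are that the paper additionally invokes ${\cal N}^{\,(4)}_{ij}=0$ to settle $d\eta^i$ on ${\cal D}^\bot$ and re-derives $[f,f]=0$ from \eqref{4.NN}, whereas you take normality from the definition and handle $d\eta^i$ uniformly; neither affects correctness.
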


\begin{proof}
Let \eqref{2.3-f-beta} be true.
Using \eqref{2.3b}, we obtain
\begin{align}\label{2.4A}
 (\nabla_X\,\eta^i)Y = X g(\xi_i, Y) -g(\xi_i, \nabla_X\,Y)
 = g(\nabla_{X}\,\xi_i, Y)
 %\overset{\eqref{2.3b}}
 = \beta\{g(X,Y) -\sum\nolimits_{\,j}\eta^j(X)\,\eta^j(Y)\}
\end{align}
for all $X,Y\in\mathfrak{X}_M$.
By \eqref{2.4A}, $(\nabla_X\,\eta^i)Y=(\nabla_Y\,\eta^i)X$ is true.
Thus, for $X,Y\in {\cal D}$ we obtain
\[
 0 = (\nabla_X\,\eta^i)Y-(\nabla_Y\,\eta^i)X
 %\overset{\eqref{2.4A}}
 = -\beta\,g([X,Y], \xi_i)
\]
that means integrability of the distribution ${\cal D}$, or equivalently, $d\eta^i(X,Y)=0$ for all $i=1,\ldots,s$ and $X,Y\in {\cal D}$.
By this and ${\cal N}^{\,(4)}_{ij}=0$, see \eqref{Eq-normal}, we find
%\[
$ d\eta^i = 0$.
%\]
Using \eqref{2.3-f-beta} and \eqref{E-3.3},
%and assuming $\nabla X=\nabla Y=\nabla Z=0$ at a point $x\in M$,
we get
\[
 3\,d\Phi(X,Y,Z) = 2\,\beta\{
  \bar\eta(X) g({f}Z, Y)
 +\bar\eta(Y) g({f}X, Z)
 +\bar\eta(Z) g({f}Y, X)
 \}.
\]
On the other hand, we have
\[
 3(\bar\eta\wedge\Phi)(X,Y,Z) =
  \bar\eta(X) g({f}Z, Y)
 +\bar\eta(Y) g({f}X, Z)
 +\bar\eta(Z) g({f}Y, X).
\]
Thus,
$d\Phi = 2\,\beta\,\bar\eta\wedge\Phi$ is valid.
By \eqref{4.NN} with $S=f$, and \eqref{2.3-f-beta}, we get $[{f},{f}]=0$; thus, using $d\eta^i=0$, yields ${\cal N}^{\,(1)} = 0$.
Finally, from \eqref{3.1-new}, using \eqref{2.1} and \eqref{2.2}, we~obtain
\begin{align*}
 & g((\nabla_{X}{f})Y,Z) - \frac12\,{\cal N}^{\,(5)}(X,Y,Z) = 3\,\beta\big\{ (\bar\eta\wedge\Phi)(X,fY,fZ) - (\bar\eta\wedge\Phi)(X,Y,Z) \big\} \\
 & = \beta\big\{ -\bar\eta(X) g(QZ, fY) + \bar\eta(X) g(Z, fY) - \bar\eta(Y) g(fX, Z) - \bar\eta(Z) g(X, fY)\big\} \\
 & = \beta\big\{ \bar\eta(Z) g(fX, Y)  - \bar\eta(Y) g(fX, Z) - \bar\eta(X) g(fY, \widetilde Q Z) \big\}.
\end{align*}
From this, using \eqref{2.3-f-beta}, we get ${\cal N}^{\,(5)}(X,Y,Z)=2\,\beta\,\bar\eta(X) g(f Y, \widetilde Q Z)$.

Conversely, using  \eqref{2.1} and \eqref{Eq-almost-K} in \eqref{3.1-new}, we obtain
\begin{align*}
 & 2\,g((\nabla_{X}{f})Y,Z) = 6\,\beta\,(\bar\eta\wedge\Phi)(X,{f} Y,{f} Z) - 6\,\beta\,(\bar\eta\wedge\Phi)(X,Y,Z)
  +2\,\beta\,\bar\eta(X) g(\widetilde Q f Y, Z) \\
 & = 2\,\beta\,\big\{ -\bar\eta(X) g(fY, QZ) - \bar\eta(X) g({f}Z, Y) -\bar\eta(Y) g({f}X, Z) -\bar\eta(Z) g({f}Y, X)
  +\bar\eta(X) g(f Y, \widetilde QZ) \big\} \\
 & = 2\,\beta\{g({f} X, Y)\,g(\bar\xi, Z) -\bar\eta(Y) g({f}X, Z)\}  ,
\end{align*}
thus \eqref{2.3-f-beta} is true.
\end{proof}

\begin{definition}[\cite{rov-126}]\label{D-wK2}
\rm
An even-dimensional Riemannian manifold $(\bar M, \bar g)$ equipped with a skew-symmet\-ric {\rm (1,1)}-tensor $J$
(other than a complex structure) is called a \textit{weak Hermitian manifold} if
%the tensor
$J^{\,2}$ is negative definite.
%A \textit{weak Hermitian structure} on an even-dimensional Riemannian manifold
%$(\bar M^{2n}, \bar g)$, equipped with a skew-symmetric {\rm (1,1)}-tensor $J$ is defined by condition $J^{\,2}<0$.
If $\bar\nabla J=0$, where $\bar\nabla$ is the Levi-Civita connection of $\bar g$, then
%such~
$(\bar M,\bar g, J)$ is called a \textit{weak K\"{a}hler~manifold}.
% and $\bar\nabla J=0$, where $\bar\nabla$ is the Levi-Civita connection of $\bar g$.
\end{definition}

%CC-2010,Venka-2021
\begin{remark}\rm
%In 1923,
L.P. Eisenhart \cite{E-1923} proved that if a Riemannian manifold $(\bar M, \bar g)$ admits a parallel symmetric 2-tensor other than the constant
multiple of $\bar g$, then it is reducible.
%Thus  a weak K\"{a}hler~manifold with $J^2\ne c\,\bar g$, where $c\in\mathbb{R}$, is reducible.
Several authors studi\-ed the (skew-)symmetric parallel 2-tensors and classified them, for example, \cite{Gupta-2020,H-2022}.
\end{remark}

Let $(\bar M,\bar g)$ be a Riemannian manifold.
A \textit{twisted product} $\mathbb{R}^s\times_\sigma\bar M$
is the product $M=\mathbb{R}^s\times\bar M$ with the metric $g=dt^2\oplus \sigma^2\,\bar g$,
where $\sigma>0$ is a smooth function on $M$.
Set $\xi_i=\partial_{\,t_i}$.
The~Levi-Civita connections, $\nabla$ of $g$ and $\bar\nabla$ of $\bar g$, are related as follows:

\noindent\ \
(i) $\nabla_{\xi_i}\,\xi_j= 0$,
$\nabla_X\,\xi_i=\nabla_{\xi_i}X = \xi_i(\log\sigma)X$ for $X\perp Span\{\xi_1,\ldots,\xi_s\}$.

\noindent\ \
(ii) $\pi_{1*}(\nabla_XY) = -g(X,Y)\,\pi_{1*}(\nabla\log\sigma)$,
where $\pi_1: M \to\mathbb{R}^s$ is the orthoprojector.

\noindent\ \
(iii) $\pi_{2*}(\nabla_XY)$ is the lift of $\bar\nabla_XY$, where $\pi_2: M \to\bar M$ is the orthoprojector.

\noindent
If $\sigma(t_1,\ldots,t_s)>0$ is a smooth function on $\mathbb{R}^s$, then we get a \textit{warped product} $\mathbb{R}^s\times_\sigma\bar M$.

\begin{theorem}\label{T-2.1}
%Any point of
A weak $\beta f$-KM %has a neighborhood which
is locally a twisted product $\mathbb{R}^s\times_\sigma\bar M$, where $\bar M(\bar g, J)$ is a weak Hermitian manifold
$($a warped product if $\nabla\beta\perp{\cal D}$, and then $\bar M(\bar g, J)$ a weak K\"{a}hler manifold$)$,
$\bar\xi(\log\sigma)=s\,\beta$ and $-\beta\,\bar\xi$ is the mean curvature vector of the distribution ${\cal D}$.
%\begin{equation}\label{E-f-warp}
% $(\partial_t\,\sigma)/\sigma=\beta$ is true.
 %f(t)=ce^t,
%\end{equation}
%, and the structure
%$(\xi,\eta,{f},Q, g)$ is given on $U$ as \eqref{E-struc}.
%Moreover, if $M$ is simply connected and complete, then $M=\mathbb{R}^s\times_\sigma\bar M$.
%% the product is global.
%$($a warped product when $X(\beta)=0$ for any $X\in{\cal D})$, where $\bar M(\bar g, J)$ is a weak K\"{a}hler manifold.
\end{theorem}

\begin{proof}
By \eqref{Eq-normal-3}, the distribution ${\cal D}^\bot$ is tangent to a foliation with flat totally geodesic~leaves,
and by the second equality of \eqref{Eq-normal-2}, the distribution ${\cal D}$ is tangent to a foliation.
By \eqref{2.3b}, the Weingarten operator $A_{\xi_i}=-(\nabla\,\xi_i)^\bot\ (1\le i\le s)$ on ${\cal D}$ is conformal: $A_{\xi_i}X = -\beta X\ (X\in{\cal D})$. Hence, ${\cal D}$ is tangent to a totally umbilical foliation with the mean curvature vector $H=-\beta\,\bar\xi$.
By \cite[Theorem~1]{pr-1993}, our manifold is locally a twisted product.
By the above property (ii) of a twisted product, $\bar\xi(\log\sigma)=s\,\beta$ is true.
If $X(\beta)=0\ (X\in{\cal D})$, then we get locally a warped product, see \cite[Proposition~3]{pr-1993}.
By \eqref{2.2}, the (1,1)-tensor $J=f|_{\,\cal D}$ is skew-symmetric and $J^2$ is negative definite.
To show $\bar\nabla J=0$, using \eqref{2.3-f-beta} we find
$(\bar\nabla_X J)Y=\pi_{2*}((\nabla_X{f})Y)=0$ for $X,Y\in{\cal D}$.
\end{proof}

\begin{example}\label{Ex-sHK}\rm
a) According to \cite{E-1923}, a weak K\"{a}hler~manifold with $J^2\ne c\,\bar g$, where $c\in\mathbb{R}$, is reducible.
Take two (or even more) Hermitian manifolds $(\bar M_i,\bar g_i, J_i)$, hence $J_i^2=-{\rm id}_{\,i}$.
The product $\prod_{\,i}(\bar M_i,\bar g_i, c_i J_i)$, where $c_i\ne0$ are different constants, is a weak
Hermitian manifold with $Q=\bigoplus_{\,i}c_i^2{\rm id}_{\,i}$.
Moreover, if $(\bar M_i,\bar g_i, J_i)$ are K\"{a}hler~manifolds, then $\prod_{\,i}(\bar M_i,\bar g_i, c_i J_i)$ is a weak K\"{a}hler~manifold.
b) Let $\bar M(\bar g,J)$ be a weak K\"{a}hler manifold and
$\sigma(t_1,\ldots,t_s)=c\,e^{\,\beta\sum t_i}$ a function on Euclidean space $\mathbb{R}^s$, where $c\ne0$ and $\beta$ are constants.
Then the warped product mani\-fold $M=\mathbb{R}^s\times_\sigma\bar M$ has a weak metric $f$-structure which satisfies \eqref{2.3-f-beta}.
Using \eqref{4.NN} with $S=J$, for a weak K\"{a}hler manifold, we get $[{J},{J}]=0$; hence, ${\cal N}^{\,(1)}=0$ is true.
\end{example}

\begin{corollary}
A weak $f$-KM $M^{2n+s}(f,Q,\xi_i,\eta^i,g)$ is locally a warped product $\mathbb{R}^s(t_1,\ldots,t_s)\times_\sigma \bar M$,
where $\sigma=c\,e^{\,\sum t_i}\ (c=const\ne0)$ and $\bar M(\bar g,J)$ is a weak K\"{a}hler manifold.
\end{corollary}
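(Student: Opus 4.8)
The plan is to read off the statement from Theorem~\ref{T-2.1} and then to pin down the warping function by matching the mean curvature of the leaves of ${\cal D}$. First I would note that a weak Kenmotsu $f$-manifold is a weak $\beta$-Kenmotsu $f$-manifold with $\beta\equiv1$; in particular $\beta$ is constant, so $X(\beta)=0$ for all $X\in{\cal D}$. Hence the warped-product branch of Theorem~\ref{T-2.1} applies: every point has a neighborhood isometric to a warped product $\mathbb{R}^s\times_\sigma\bar M$, where $\bar M(\bar g,J)$ is weak K\"ahlerian and $\sigma>0$ is a function on the base $\mathbb{R}^s(t_1,\dots,t_s)$.

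It then remains to identify $\sigma$. By \eqref{Eq-normal-3} the fields $\{\xi_i\}$ are orthonormal and pairwise commuting (as $[\xi_i,\xi_j]=\nabla_{\xi_i}\xi_j-\nabla_{\xi_j}\xi_i=0$), so I would take them as the coordinate vector fields $\partial_{t_i}$ on the flat totally geodesic base $\mathbb{R}^s$. Next, \eqref{2.3b} with $\beta=1$ shows that the fibers $\bar M$ (the leaves of ${\cal D}$) are totally umbilical with normalized mean curvature vector $H=-\bar\xi=-\sum_i\xi_i$. On the other hand, for a warped product the fibers are totally umbilical with $H=-\operatorname{grad}(\ln\sigma)$, the gradient being taken on the base and lying in ${\cal D}^\bot$. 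Comparing the two expressions gives $\operatorname{grad}(\ln\sigma)=\sum_i\partial_{t_i}$, i.e.\ $\partial_{t_i}(\ln\sigma)=1$ for every $i$; integrating yields $\ln\sigma=\sum_i t_i+\mathrm{const}$, that is $\sigma=c\,e^{\sum_i t_i}$ with $c=\mathrm{const}\neq0$, as claimed.

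I expect the delicate point to be the passage from the intrinsic mean-curvature data to the explicit warping function. One must verify that the orthonormal $\xi_i$ really are the coordinate fields of the $\mathbb{R}^s$-factor (this is exactly where $\nabla_{\xi_i}\xi_j=0$ and $d\eta^i=0$ enter) and that the warped-product identity $H=-\operatorname{grad}(\ln\sigma)$ is invoked with the same normalized convention for $H$ as in Theorem~\ref{T-2.1}, so that the factor $2n$ coming from the trace over ${\cal D}$ is handled consistently. Once the conventions match, the remaining work is the routine integration above, and the outcome can be cross-checked against the Example, whose $\beta=1$ specialization reproduces precisely $\sigma=c\,e^{\sum t_i}$.
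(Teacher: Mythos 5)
Your argument is correct and matches the route the paper intends: the corollary is stated as an immediate consequence of Theorem~\ref{T-2.1} with $\beta\equiv1$, and your identification of $\sigma$ by equating the umbilical factor $H=-\beta\,\bar\xi$ from the proof of Theorem~\ref{T-2.1} with $-\operatorname{grad}(\ln\sigma)$ of the warped product (using $\nabla_{\xi_i}\xi_j=0$ and $d\eta^i=0$ to take $\xi_i=\partial_{t_i}$) is exactly the computation underlying the paper's Example, which yields $\sigma=c\,e^{\sum t_i}$. No gaps.
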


To simplify the calculations in the rest of the paper, we assume that $\beta=const\ne0$.

\section{Curvature of weak $\beta f$-KM}
\label{sec:03-beta-R}

In this section we study the curvature of weak $\beta f$-KM.

%ghosh2019ricci,olszak1991normal,kenmotsu1972class
\begin{proposition}
%[see~\cite{rov-126} for $s=1$]
%\label{Lem-32}
%The following formulas hold
For a weak $\beta f$-KM with $\beta=const$, we have
\begin{align}\label{2.4}
 & R_{X, Y}\,\xi_i = \beta^2\big\{\bar\eta(X)Y-\bar\eta(Y)X +\sum\nolimits_{\,j}\big(\bar\eta(Y)\eta^j(X) - \bar\eta(X)\eta^j(Y)\big)\xi_j\big\}
 %\beta^2\big\{\bar\eta(X)Y-\bar\eta(Y)X +\big(\bar\eta(Y)\eta^p(X) - \bar\eta(X)\eta^p(Y)\big)\xi_p\big\}
 \quad (X,Y\in\mathfrak{X}_M),\\
\label{2.5-f-beta}
 & \Ric^\sharp \xi_i = -2\,n\,\beta^2\bar\xi , \\
%\end{align}
%\begin{align}
\label{3.1}
 & (\nabla_{\xi_i}\Ric^\sharp)X = - 2\,\beta\Ric^\sharp X
 - 4\,n\,\beta^3\big\{ s X - s\sum\nolimits_{\,j}\eta^j(X)\,\xi_j + \bar\eta(X)\,\bar\xi\,\big\}
% + 2\,\beta^3\big\{ (s-1-2s\,n) X + 2\,n\big(s\,\eta^j(X)\,\xi_j - \bar\eta(X)\bar\xi\,\big)\big\}
 \quad (X\in\mathfrak{X}_M) ,\\
\label{3.1A-f-beta}
 & \xi_i(r) = -2\,\beta\{r + 2\,s\,n(2\,n + 1)\,\beta^2\}\quad (1\le i\le s), \\
%\end{align}
%\begin{align}
\label{E-L-02a}
 &(\nabla_X\Ric^\sharp)\,\xi_i=
 - \beta\Ric^\sharp X - 2\,n\,\beta^3\big\{ s X - s\sum\nolimits_{\,j}\eta^j(X)\,\xi_j + \bar\eta(X)\,\bar\xi\,\big\}\quad (X\in\mathfrak{X}_M) .
\end{align}
\end{proposition}

\begin{proof} Taking covariant derivative of \eqref{2.3b} along $Y\in \mathfrak{X}_M$,
%and assuming $\nabla X=\nabla Y=0$ at a point of the manifold, $x\in M$,
we get
\begin{align*}
 \nabla_Y\nabla_X\,\xi_i =
 %Y(\beta)\,(X-\eta^j(X)\xi_j) +
 -\beta^2\,\big\{ \big( g(X,Y) - \sum\nolimits_{\,q}\eta^q(Y)\eta^q(X)\big)\,\bar\xi + \bar\eta(X)\big(Y - \sum\nolimits_{\,p}\eta^p(Y)\xi_p\big)\big\}.
\end{align*}
Repeated application of \eqref{2.3b} and the foregoing equation in the {curvature~tensor} $R$ of the Riemannian manifold, we get \eqref{2.4}.
Using a local orthonormal basis $(e_q)$ of the ma\-nifold,
and the equality $\sum\nolimits_{\,p,q}\big(\bar\eta(Y)\eta^p(e_q) - \bar\eta(e_q)\eta^p(Y)\big) \eta^p(e_q)= (s-1)\,\bar\eta(Y)$,
%and $g(\Ric^\sharp \xi_i,X)= \sum_{\,i} g(R_{e_i, X}\,\xi_i, e_i)$, where $(e_i)$ is ,
we derive from \eqref{2.4}
\begin{align*}
 & g(\Ric^\sharp \xi_i,Y)= \sum\nolimits_{\,q} g(R_{e_q, Y}\,\xi_i, e_q) \\
 & = \beta^2\sum\nolimits_{\,q}\big\{\bar\eta(e_q)g(Y,e_q)-\bar\eta(Y)g(e_q,e_q)
 +\big(\bar\eta(Y)\eta^p(e_q) - \bar\eta(e_q)\eta^p(Y)\big) \eta^p(e_q)\big\} \\
 & = \beta^2 \big\{ g(Y, \bar\xi)-(2\,n+s)\bar\eta(Y)
 +(s-1)\,\bar\eta(Y)
 %+\big(\bar\eta(Y)\eta^p(e_q) - \bar\eta(e_q)\eta^p(Y)\big)g(\xi_p,e_q)
 \big\}
 = -2\,n\,\beta^2 g(\bar\xi, Y),
\end{align*}
from which we get \eqref{2.5-f-beta}. Next, using \eqref{2.3b}, we get
\begin{align}\label{Eq-9}
 (\pounds_{\xi_i}\,g)(Y,Z) = g(\nabla_Y\,\xi_i, Z)+g(\nabla_Z\,\xi_i, Y) = 2\,\beta\big(g(Y,Z)-\sum\nolimits_{\,j}\eta^j(Y)\,\eta^j(Z)\big).
 %\quad (Y,Z\in\mathfrak{X}_M).
\end{align}
Taking covariant derivative of \eqref{Eq-9} along $X$ and using \eqref{2.3b} gives
\begin{align*}
  (\nabla_X\pounds_{\xi_i}\,g)(Y,Z) = 2\,\beta^2\big\{\sum\nolimits_{\,j}\eta^j(X)\big(\eta^j(Y)\,\bar\eta(Z) + \bar\eta(Y)\,\eta^j(Z)\big)
  -g(X,Y)\,\bar\eta(Z) -g(X,Z)\,\bar\eta(Y)\big\}
\end{align*}
for all $X,Y,Z\in\mathfrak{X}_M$.
Using this in \eqref{Eq-6} with $V=\xi_i$, we obtain
\begin{align}\label{Eq-9b}
\nonumber
 & g((\pounds_{\xi_i}\nabla)(X,Y), Z) + g((\pounds_{\xi_i}\nabla)(X,Z), Y) \\
 & = 2\,\beta^2\big\{\sum\nolimits_{\,j}\eta^j(X)\big(\eta^j(Y)\,\bar\eta(Z) + \bar\eta(Y)\,\eta^j(Z)\big)
  -g(X,Y)\,\bar\eta(Z) -g(X,Z)\,\bar\eta(Y)\big\}.
\end{align}
By a combinatorial computation, we find
\begin{align*}
 & g((\pounds_{\xi_i}\nabla)(Y,Z), X) + g((\pounds_{\xi_i}\nabla)(Y,X), Z) \\
 & = 2\,\beta^2\big\{\sum\nolimits_{\,j}\eta^j(Y)\big(\eta^j(Z)\,\bar\eta(X) + \bar\eta(Z)\,\eta^j(X)\big)
  -g(Y,Z)\,\bar\eta(X) -g(Y,X)\,\bar\eta(Z) \big\},\\
%%%
 & g((\pounds_{\xi_i}\nabla)(Z,X), Y) + g((\pounds_{\xi_i}\nabla)(Z,Y), X) \\
 & = 2\,\beta^2\big\{\sum\nolimits_{\,j}\eta^j(Z)\big(\eta^j(X)\,\bar\eta(Y) + \bar\eta(X)\,\eta^j(Y)\big)
  -g(Z,X)\,\bar\eta(Y) -g(Z,Y)\,\bar\eta(X) \big\}.
\end{align*}
Subtracting \eqref{Eq-9b} from the sum of the last two equations gives
\begin{align}\label{Eq-10}
 (\pounds_{\xi_i}\nabla)(Y,Z) = 2\,\beta^2\big\{\sum\nolimits_{\,j}\eta^j(Y)\,\eta^j(Z) - g(Y,Z)\big\}\,\bar\xi\qquad (Y,Z\in\mathfrak{X}_M).
\end{align}
Taking covariant derivative of \eqref{Eq-10} along $X$ and using \eqref{2.3b} gives
\begin{align*}
 & (\nabla_X\pounds_{\xi_i}\nabla)(Y,Z)
   = 2\,\beta^3\big\{ \big[ \big( g(X,Y) - \sum\nolimits_{\,j}\eta^j(X)\,\eta^j(Y)\big)\,\bar\eta(Z) \\
 & + \big( g(X,Z) - \sum\nolimits_{\,j}\eta^j(X)\,\eta^j(Z) \big)\,\bar\eta(Y) \big]\bar\xi
 +s\big(\sum\nolimits_{\,j}\eta^j(Y)\,\eta^j(Z) - g(Y,Z)\big)\,\big(X - \sum\nolimits_{\,p}\eta^p(X)\,\xi_p\big) \big\} .
\end{align*}
Using this in \eqref{Eq-7} with $V=\xi_i$, we obtain
\begin{align}\label{Eq-11}
\nonumber
 & (\pounds_{\xi_i}R)_{X,Y} Z = 2\,\beta^3 \big\{
 \big( g(X,Z) {-} \sum\nolimits_{\,j}\eta^j(X)\,\eta^j(Z) \big)\big(\bar\eta(Y)\,\bar\xi + s(Y-\sum\nolimits_{\,q}\eta^q(Y)\,\xi_q)\big) \\
 & - \big( g(Y,Z) - \sum\nolimits_{\,j}\eta^j(Y)\,\eta^j(Z) \big)\big(\bar\eta(X)\,\bar\xi + sX-s\sum\nolimits_{\,q}\eta^q(X)\,\xi_q\big)\big\}.
\end{align}
Contracting \eqref{Eq-11} over $X$, we deduce
\begin{align}\label{Eq-12}
 (\pounds_{\xi_i}\Ric)(Y,Z) = \sum\nolimits_{\,a}g((\pounds_{\xi_i}R)_{e_a,Y} Z, e_a)
 = -4\,s\,n\,\beta^3 \big( g(Y,Z)- \sum\nolimits_{\,j}\eta^j(Y)\,\eta^j(Z)\big).
\end{align}
Taking the Lie derivative of equality $\Ric(Y,Z)= g(\Ric^\sharp Y, Z)$, we obtain
\begin{align}\label{Eq-13}
 (\pounds_{\xi_i}\Ric)(Y,Z) = (\pounds_{\xi_i}\,g)(\Ric^\sharp Y,Z) +g((\pounds_{\xi_i}\Ric^\sharp)Y, Z).
\end{align}
On the other hand, replacing $Y$ by $\Ric^\sharp Y$ in \eqref{Eq-9} and using \eqref{2.5-f-beta}, we obtain
\begin{align}\label{Eq-14}
\nonumber
 (\pounds_{\xi_i}\,g)(\Ric^\sharp Y,Z)
 & = 2\,\beta\big(g(\Ric^\sharp Y, Z) - g(\Ric^\sharp \xi_j, Y)\,\eta^j(Z)\big) \\
 &= 2\,\beta\big(g(\Ric^\sharp Y, Z) +2\,n\beta^2\bar\eta(Y)\,\bar\eta(Z)\big) .
\end{align}
Applying \eqref{Eq-13} and \eqref{Eq-14} in \eqref{Eq-12} we get
\begin{align}\label{3.1b}
 (\pounds_{\xi_i}\Ric^\sharp)Y = -2\,\beta\Ric^\sharp Y - 4\,n\,\beta^3\big\{ sY - s\sum\nolimits_{\,j}\eta^j(Y)\,\xi_j + \bar\eta(Y)\bar\xi\,\big\} .
\end{align}
Using \eqref{2.3b}, we calculate
\begin{align*}
 & (\pounds_{\xi_i}\Ric^\sharp)Y = \pounds_{\xi_i}(\Ric^\sharp Y) -\Ric^\sharp\pounds_{\xi_i} Y  \\
 & =\nabla_{\xi_i}(\Ric^\sharp Y) -\nabla_{\Ric^\sharp Y}\,{\xi_i} -\Ric^\sharp\nabla_{\xi_i} Y +\Ric^\sharp\nabla_Y\,{\xi_i}
 %\\
 %& =(\nabla_{\xi_i}\Ric^\sharp) Y -\Ric^\sharp Y -2\,n\,\eta^j(Y)\,{\xi_j} +\Ric^\sharp Y +2\,n\,\eta^j(Y)\,{\xi_j}
 = (\nabla_{\xi_i}\Ric^\sharp) Y.
\end{align*}
Using this in \eqref{3.1b}, gives \eqref{3.1}.
Contracting \eqref{3.1}, we get \eqref{3.1A-f-beta}.
Taking covariant derivative of \eqref{2.5-f-beta} along $X$ and using \eqref{2.3b} gives \eqref{E-L-02a}.
\end{proof}

Remark that by \eqref{2.5-f-beta} weak $\beta f$-KM with $s>1$ cannot be Einstein manifolds.

\begin{proposition}\label{prop3.1A}
For an $\eta$-Einstein \eqref{Eq-2.10} weak $\beta f$-KM %$M^{2n+s}({f},Q,\xi_i,\eta^i,g)$
with $\beta=const$, we obtain
\begin{align}\label{3.16}
%\notag
 {\rm Ric} = \big(s\beta^2+\frac{r}{2\,n}\big)\,\big\{ g - \sum\nolimits_{\,j}\eta^j\otimes\eta^j\big\} -2\,n\beta^2 \bar\eta\otimes\bar\eta.
% & {\rm Ric} = \big(s\beta^2+\frac{r}{2n}\big)\,g -\big((2n+s)\beta^2+\frac{r}{2n}\big)\sum\nolimits_{\,j}\eta^j\otimes\eta^j
% -2\,n\beta^2\sum\nolimits_{\,i\ne j} \eta^i\otimes\eta^j.
% , \\
% & {\rm Ric}^\sharp X = \big( s\,\beta^2{+}\frac{r}{2\,n} \big)X {-}\big((2\,n{+}s)\,\beta^2 {+} \frac{r}{2\,n}\big)\sum\nolimits_{\,j}\eta^j(X)\,\xi_j
% {-}2\,n\beta^2\sum\nolimits_{\,i\ne j} \eta^i(X)\,\xi_j \quad (X\in\mathfrak{X}_M).
\end{align}
\end{proposition}

\begin{proof}
Tracing \eqref{Eq-2.10} gives $r=(2\,n+s)\,a+sb$.
Putting $X=Y=\xi_i$ in \eqref{Eq-2.10} and using \eqref{2.5-f-beta},
yields $a+b=-2\,n\,\beta^2$. Thus,
%\[
 $a = s\,\beta^2+\frac{r}{2\,n}$
 and
 $b = -(2\,n+s)\,\beta^2 - \frac{r}{2\,n}$,
%\]
and \eqref{Eq-2.10} yields
%gives the required
\eqref{3.16}.
\end{proof}

The following theorem generalizes \cite[Theorem~1]{ghosh2019ricci} with $\beta\equiv1$ and $Q={\rm id}$.

\begin{theorem}\label{Th-01}
Let a weak $\beta f$-KM $M^{2n+s}({f},Q,\xi_i,\eta^i,g)$  with $\beta=const$ satisfy $\nabla_{\bar\xi}\Ric^\sharp=0$.
Then $(M,g)$ is an $\eta$-Einstein manifold \eqref{Eq-2.10} of scalar curvature $r=-2\,s\,n(2\,n+1)\,\beta^2$.
\end{theorem}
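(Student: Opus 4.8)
The plan is to notice that the hypothesis collapses the right-hand side of \eqref{3.1}, converting a differential identity into an explicit algebraic formula for $\Ric^\sharp$. Setting $(\nabla_{\xi_i}\Ric^\sharp)X=0$ in \eqref{3.1} and dividing by the nonzero factor $-2\,\beta$ (legitimate since $\beta=const\neq0$), I would obtain
\begin{align*}
 \Ric^\sharp X = -2\,n\,\beta^2\big\{ s\big(X -\eta^j(X)\,\xi_j\big) +\bar\eta(X)\,\bar\xi\big\}\qquad(X\in\mathfrak{X}_M).
\end{align*}
This single identity already encodes the whole assertion; what remains is purely a matter of rewriting it in the prescribed $\eta$-Einstein shape and reading off the constants.

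Next I would lower the index by pairing with $Y$ and expand the abbreviations $\bar\eta=\sum_i\eta^i$ and $\bar\xi=\sum_j\xi_j$, so that the term $\bar\eta(X)\bar\xi$ becomes the double sum $\sum_{i,j}\eta^i(X)\eta^j(Y)$. Splitting this double sum into its diagonal part $\sum_i\eta^i\otimes\eta^i$ and its off-diagonal part $\sum_{i\neq j}\eta^i\otimes\eta^j$, and combining with the $s\,\eta^j(X)\xi_j$ contribution, I expect to land on
\begin{align*}
 \Ric = -2\,n\,s\,\beta^2\,g + 2\,n(s-1)\,\beta^2\sum\nolimits_{\,i}\eta^i\otimes\eta^i - 2\,n\,\beta^2\sum\nolimits_{\,i\neq j}\eta^i\otimes\eta^j .
\end{align*}
Comparing with \eqref{Eq-2.10} gives $a=-2\,n\,s\,\beta^2$ and $b=2\,n(s-1)\,\beta^2$, and the decisive consistency check is that the off-diagonal coefficient indeed equals $a+b=-2\,n\,\beta^2$; this is precisely what certifies that the tensor is $\eta$-Einstein in the exact sense of \eqref{Eq-2.10}, rather than merely some generic symmetric $(0,2)$-tensor.

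For the scalar curvature I would avoid recomputing a trace of $\Ric$ and instead trace the hypothesis directly: $\nabla_{\xi_i}\Ric^\sharp=0$ forces $\xi_i(r)=\operatorname{trace}(\nabla_{\xi_i}\Ric^\sharp)=0$. Substituting $\xi_i(r)=0$ into \eqref{3.1A-f-beta} and once more dividing by $-2\,\beta\neq0$ yields $r=-2\,s\,n(2\,n+1)\,\beta^2$ immediately. As an independent cross-check, the trace formula $r=(2n+s)a+sb$ from the Remark, applied to the constants found above, reproduces the same value.

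Because equation \eqref{3.1} performs all the analytic work upstream, there is essentially no analytic obstacle in this argument. The only point demanding genuine care is the index bookkeeping in the second paragraph: correctly separating the diagonal from the off-diagonal pairs in $\sum_{i,j}\eta^i\otimes\eta^j$ and verifying the structural relation for the cross term. That verification is exactly what guarantees the conclusion matches the definition \eqref{Eq-2.10}, and it is the step I would write out most carefully.
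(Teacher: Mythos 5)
Your argument is correct and is essentially the paper's own proof: both set the left side of \eqref{3.1} to zero to obtain the explicit formula for $\Ric^\sharp$, read off $a$ and $b$ by splitting $\bar\eta\otimes\bar\xi$ into diagonal and off-diagonal parts, and extract $r$ (your extra route via \eqref{3.1A-f-beta} is a harmless redundancy). In fact your signs are the right ones --- the paper's proof writes $\Ric^\sharp Y = +2n\beta^2\{\dots\}$ and $a=2sn\beta^2$, $b=2(1-s)n\beta^2$, which is a sign slip, since solving \eqref{3.1} gives $\Ric^\sharp X=-2n\beta^2\{\dots\}$ and hence $a=-2sn\beta^2$, $b=2(s-1)n\beta^2$, consistent with the stated $r=-2sn(2n+1)\beta^2$ and with Theorem~\ref{thm3.1A}.
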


\begin{proof}
By \eqref{3.1} and conditions,
%\eqref{3.16B}
\[
 \Ric^\sharp X = -2\,n\,\beta^2\big\{ sX - s\sum\nolimits_{\,j}\eta^j(X)\,\xi_j + \bar\eta(X)\bar\xi\,\big\},
\]
is valid.
Since \eqref{Eq-2.10} with $a=-2\,s\,n\,\beta^2$ and $b=2\,(s-1)\,n\,\beta^2$ is true,
$(M,g)$ is an $\eta$-Einstein manifold of constant scalar curvature $r=-2\,s\,n(2\,n+1)\,\beta^2$.
For $s=1$, $(M,g)$ is an Einstein manifold.
\end{proof}

\section{$\eta$-RS on weak $\beta f$-KM}
\label{sec:03-f-beta}

Here, we study the interaction of weak $\beta f$-KM with $\eta$-RS and generalize some results in~\cite{RP-1}.

First, we derive the following two lemmas.

The following result generalizes Lemma~4 in \cite{rov-126}.

\begin{lemma}\label{lem3.2}
Let $(g,V)$ represent an $\eta$-RS \eqref{Eq-1.1} on a weak $\beta f$-KM $M^{2n+s}({f},Q,\xi_i,\eta^i,g)$ with $\beta=const$.
% with the potential vector field $V$,
Then
\begin{align}\label{3.7}
 & (\pounds_{V} \nabla)(X,\xi_i) = 2\,\beta\Ric^\sharp X + 4\,n\,\beta^3\big\{s X - s\sum\nolimits_{\,j}\eta^j(X)\,\xi_j + \bar\eta(X)\bar\xi\,\big\},\\
 %%%%%%%%%
%\notag
% & + 2\,\beta(\lambda+\mu)\big(2\,\eta^i(Y) - \bar\eta(Y)\big)\,s\,\beta\big\{X - \sum\nolimits_{\,j}\eta^j(X)\xi_j\big\} \\
%%%%%
% & - 2\,\beta(\lambda+\mu)\big(2\,\eta^i(X) - \bar\eta(X)\big)\,s\,\beta\big\{Y - \sum\nolimits_{\,j}\eta^j(Y)\xi_j\big\} .
\label{3.9}
\notag
 &(\pounds_V R)_{X,Y}\,\xi_i = 2\,\beta\{(\nabla_X\Ric^\sharp)Y - (\nabla_Y\Ric^\sharp)X\}
 + 2\,\beta^2\{\bar\eta(X)\Ric^\sharp Y  - \bar\eta(Y)\Ric^\sharp X\} \\
%\notag
 &\quad + 4\,n\beta^4\big\{\big[s Y {-} s\sum\nolimits_{\,j}\eta^j(Y)\xi_j +\bar\eta(Y)\,\bar\xi\,\big]\,\bar\eta(X)
 -[s X {-} s\sum\nolimits_{\,j}\eta^j(X)\xi_j+\bar\eta(X)\,\bar\xi\,]\,\bar\eta(Y) \big\}, \\
\label{3.9trace}
 & (\pounds_V R)_{X,\,\xi_j}\,\xi_i = 0\quad (X\in\mathfrak{X}_M,\ 1\le i,j\le s).
\end{align}
\end{lemma}

\begin{proof} Taking the covariant derivative of \eqref{Eq-1.1} along $Z\in\mathfrak{X}_M$ and using \eqref{2.3c}, we~get
\begin{align}\label{3.3A-f-beta}
\nonumber
 & \frac12\,(\nabla_Z\,\pounds_{V}\,g)(X,Y) = -(\nabla_{Z}\,{\rm Ric})(X,Y)
 +\beta[\mu+(\lambda+\mu)]\times\\
%\nonumber
 & \times\big\{ \big(g(X, Z) - \sum\nolimits_{\,j}\eta^j(X)\eta^j(Z)\big)\,\bar\eta(Y)
   + \big(g(Y, Z) - \sum\nolimits_{\,j}\eta^j(Y)\eta^j(Z)\big)\,\bar\eta(X) \big\}
%%%
% & +\beta\mu\big\{ \big(g(X, Z) {-} \sum\nolimits_{\,j}\eta^j(X)\eta^j(Z)\big)\bar\eta(Y)
%   {+} \big(g(Y, Z) {-} \sum\nolimits_{\,j}\eta^j(Y)\eta^j(Z)\big)\bar\eta(X) \big\}
%  \quad (X,Y\in\mathfrak{X}_M).
\end{align}
for all $X,Y\in\mathfrak{X}_M$.
Since Riemannian metric tensor is parallel, $\nabla g=0$, it follows from \eqref{2.6} that
\begin{align}\label{3.4}
 (\nabla_{Z}\,\pounds_{V}\,g)(X,Y) = g((\pounds_{V}\nabla)(Z,X),Y) + g((\pounds_{V}\nabla)(Z,Y),X).
\end{align}
Plugging \eqref{3.4} into (\ref{3.3A-f-beta}), we obtain
\begin{align}\label{3.5}
\nonumber
 & g((\pounds_{V}\nabla)(Z,X),Y) + g((\pounds_{V}\nabla)(Z,Y),X) = -2(\nabla_{Z}\,{\rm Ric})(X,Y) + \beta[\mu+(s-1)(\lambda+\mu)]\times\\
%\nonumber
 & \times\big\{ \big(g(X, Z) - \sum\nolimits_{\,j}\eta^j(X)\eta^j(Z)\big)\,\bar\eta(Y)
   + \big(g(Y, Z) - \sum\nolimits_{\,j}\eta^j(Y)\eta^j(Z)\big)\,\bar\eta(X) \big\}
% \\
% \nonumber
% & +2\,\beta(\lambda+\mu)\big\{ \big(g(X, Z) - \sum\nolimits_{\,j\ne p}\eta^j(X)\eta^p(Z)\big)\,\bar\eta(Y)
%   + \big(g(Y, Z) - \sum\nolimits_{\,j\ne p}\eta^j(Y)\eta^p(Z)\big)\,\bar\eta(X) \big\} \\
% & +2\,\beta\mu\big\{ \big(g(X, Z) - \sum\nolimits_{\,j}\eta^j(X)\eta^j(Z)\big)\,\bar\eta(Y)
%  + \big(g(Y, Z) - \sum\nolimits_{\,j}\eta^j(Y)\eta^j(Z)\big)\,\bar\eta(X) \big\}
\end{align}
for all $X,Y,Z\in\mathfrak{X}_M$. Cyclically rearranging $X,Y$ and $Z$ in \eqref{3.5}, we obtain
\begin{align}\label{3.6}
\nonumber
 g((\pounds_{V}\nabla)(X,Y),Z) & = (\nabla_{Z}\,{\rm Ric})(X,Y) - (\nabla_{X}\,{\rm Ric})(Y,Z) - (\nabla_{Y}\,{\rm Ric})(Z,X)\\
 & +2\,\beta[\mu +(s-1)(\lambda+\mu)]\big(g(X, Y) {-} \sum\nolimits_{\,j}\eta^j(X)\eta^j(Y)\big)\,\bar\eta(Z).
% +2\,\beta\mu\big(g(X, Y) {-} \sum\nolimits_{\,j}\eta^j(X)\eta^j(Y)\big)\,\bar\eta(Z) .
\end{align}
%Taking covariant derivative of \eqref{2.5-f-beta} along $X\in\mathfrak{X}_M$ and using \eqref{2.3-f-beta}, we obtain
%\begin{align}\label{3.2}
% (\nabla_{X}\Ric^\sharp)\,{\xi_i}=-\beta\Ric^\sharp X -2s\,n\,\beta^3 X + 2\,n\beta^3(s\sum\nolimits_{\,j}\eta^j(X)\xi_j - \bar\eta(X)\bar\xi).
%\end{align}
Substituting $Y=\xi_i$ in \eqref{3.6} yields the following:
\begin{align*}%\label{3.6}
 g((\pounds_{V}\nabla)(X,\xi_i),Z) & = (\nabla_{Z}\,{\rm Ric})(X,\xi_i) - (\nabla_{X}\,{\rm Ric})(\xi_i,Z) - (\nabla_{\xi_i}\,{\rm Ric})(Z,X) .
 %\\  & +2\,\beta(\lambda+\mu)\big(2\,\eta^i(X) - \bar\eta(X)\big)\,\bar\eta(Z).
\end{align*}
Applying \eqref{3.1} and \eqref{E-L-02a} to this, we obtain \eqref{3.7} for any $X\in\mathfrak{X}_M$.
Next, using (\ref{2.3b}) in the covariant derivative of (\ref{3.7}) along $Y\in\mathfrak{X}_M$,
and calculating $\nabla_Y\big(s\sum\nolimits_{\,j}\eta^j(X)\,\xi_j - \bar\eta(X)\bar\xi\,\big) = 0$, yields
\begin{align*}
 & (\nabla_Y(\pounds_V \nabla))(X, \xi_i) + \beta(\pounds_V \nabla)(X, Y) = 2\,\beta(\nabla_Y\Ric^\sharp)X
 + 2\,\beta^2\,\bar\eta(Y)\Ric^\sharp X \\
 & + 4\,n\,\bar\eta(Y)\beta^4\,\big\{s X - s\sum\nolimits_{\,j}\eta^j(X)\xi_j + \bar\eta(X)\,\bar\xi\,\big\}
% & + 2\,\beta(\lambda+\mu)(2-s)\big\{g(X,Y) - \sum\nolimits_{\,j}\eta^j(X)\eta^j(Y)\big\}\,\bar\xi \\
% & + 2\,\beta(\lambda+\mu)\big(2\,\eta^i(X) - \bar\eta(X)\big)\,s\,\beta\big\{Y - \sum\nolimits_{\,j}\eta^j(Y)\xi_j\big\}
\end{align*}
for any $X\in\mathfrak{X}_M$.
Plugging this in \eqref{Eq-7}
%the following formula (see \cite{yano1970integral}, p.~23):
%\begin{align*}%\label{3.8}
% (\pounds_{V} R)_{X,Y} Z = (\nabla_{X}\pounds_{V}\nabla)(Y,Z) -(\nabla_{Y}\pounds_{V}\nabla)(X,Z)
%\end{align*}
with $Z=\xi_i$, we obtain  \eqref{3.9} for all $X,Y\in\mathfrak{X}_M$.
 Substituting $Y=\xi_j$ in \eqref{3.9} gives
\begin{align}\label{3.9b}
\notag
 (\pounds_V R)_{X,\xi_j}\,\xi_i & = 2\,\beta\{(\nabla_X\Ric^\sharp)\xi_j - (\nabla_{\xi_j}\Ric^\sharp)X\}
 + 2\,\beta^2\{\bar\eta(X)\Ric^\sharp \xi_j  - \Ric^\sharp X\} \\
 & - 4\,s\,n\,\beta^4\,\big(X - \sum\nolimits_{\,j}\eta^j(X)\xi_j\big) .
%%%%
% + 2\,s\,\beta^2(\lambda+\mu)\big(2\,\delta^i_j - 1\big)\big\{X - \sum\nolimits_{\,j}\eta^j(X)\xi_j\big\} .
\end{align}
Then using \eqref{2.5-f-beta}, \eqref{3.1} and \eqref{E-L-02a} in \eqref{3.9b}, yields \eqref{3.9trace}.
\end{proof}

The following lemma generalizes Lemma~6 in \cite{rov-126}.

\begin{lemma}\label{lem3.3}
Let $(g,V)$ represent an $\eta$-RS \eqref{Eq-1.1}, a weak $\beta f$-KM $M^{2n+s}({f},Q,\xi_i,\eta^i,g)$ with $\beta=const$.
Then $\lambda+\mu=-2\,n\,\beta^2$ is true.
\end{lemma}

\begin{proof}
Using \eqref{2.4} and $g(R_{X,\,\xi_i}Z, W)=-g(R_{Z,W}\,\xi_i,X)$, we derive
\begin{align}\label{E-3.19a}
 R_{X,\xi_i}Z = \beta^2\big\{ g(X,Z)\bar\xi - \bar\eta(Z) X + \sum\nolimits_{\,j}\eta^j(X)\big(\bar\eta(Z)\xi_j - \eta^j(Z)\,\bar\xi \big) \big\}.
\end{align}
Taking the Lie derivative along $V$ of
\[
 R_{X,\,\xi_j}\,\xi_i=\beta^2\big\{\sum\nolimits_{\,k}\eta^k(X)\xi_k - X\big\},
\]
see \eqref{2.4} with $Y=\xi_j$ (or \eqref{E-3.19a} with $Z=\xi_j$), and using \eqref{2.4} and \eqref{E-3.19a},
%$\pounds_V\,\xi_k\in{\cal D}$, see \eqref{Eq-normal-2},
%and $(\pounds_{Z}\,\eta^j)(X) = Z(\eta^j(X)) - \eta^j([Z, X])$,
%$\pounds_V\,\eta$ given in \eqref{3.3B},
gives
\begin{align}\label{E-3.19ab}
\notag
 & (\pounds_V  R)_{X,\,\xi_j}\,\xi_i = - \beta^2\Big\{\bar\eta(X)\pounds_V\,\xi_j -\bar\eta(\pounds_V\,\xi_j)X
 + \sum\nolimits_{\,k}\{\bar\eta(\pounds_V\,\xi_j)\,\eta^k(X) -\bar\eta(X)\eta^k(\pounds_V\,\xi_j)\}\xi_k \Big\} \\
\notag
 & - \beta^2\Big\{g(X,\pounds_V\,\xi_i)\,\bar\xi
 -\bar\eta(\pounds_V\,\xi_i)X + \sum\nolimits_{\,k}\eta^k(X)\{\bar\eta(\pounds_V\,\xi_i)\,\xi_k -\eta^k(\pounds_V\,\xi_i)\,\bar\xi\}\Big\} \\
 & + \beta^2\sum\nolimits_{\,k}\big\{ (\pounds_V\,\eta^k)(X)\xi_k + \eta^k(X)\pounds_V\,\xi_k\big\}.
% (\pounds_V R)_{X,\,\xi_j}\,\xi_i - 2\,\beta^2\eta(\pounds_V\,\xi)X + \beta^2 g(X,\pounds_V\,\xi)\,\xi = \beta^2(\pounds_V\,\eta)(X)\,\xi ,
\end{align}
Here we used $\pounds_V  (R_{X,\,\xi_j}\,\xi_i) =(\pounds_V  R)_{X,\,\xi_j}\,\xi_i + R_{X,\pounds_V\,\xi_j}\,\xi_i + R_{X,\,\xi_j}\,\pounds_V\,\xi_i$ with
\begin{align*}
 & R_{X,\pounds_V\,\xi_j}\,\xi_i = \beta^2\big\{\bar\eta(X)\pounds_V\,\xi_j -\bar\eta(\pounds_V\,\xi_j)X
 + \sum\nolimits_{\,k}\{\bar\eta(\pounds_V\,\xi_j)\,\eta^k(X) -\bar\eta(X)\eta^k(\pounds_V\,\xi_j)\}\xi_k \big\},\\
 & R_{X,\,\xi_j}\,\pounds_V\,\xi_i =\beta^2\big\{g(X,\pounds_V\,\xi_i)\,\bar\xi
 -\bar\eta(\pounds_V\,\xi_i)X +\sum\nolimits_{\,k}\eta^k(X)\{\bar\eta(\pounds_V\,\xi_i)\,\xi_k -\eta^k(\pounds_V\,\xi_i)\,\bar\xi\}\big\}.
\end{align*}
In view of \eqref{3.9trace}, the equation \eqref{E-3.19ab} divided by $\beta^2$ becomes
\begin{align}\label{E-3.19}
\notag
 & \sum\nolimits_{\,k}(\pounds_V\,\eta^k)(X)\xi_k + \sum\nolimits_{\,k}\eta^k(X)\pounds_V\,\xi_k
 - \bar\eta(X)\pounds_V\,\xi_j + \bar\eta(\pounds_V\,\xi_j)X \\
\notag
 & - \bar\eta(\pounds_V\,\xi_j)\sum\nolimits_{\,k}\eta^k(X)\xi_k +\bar\eta(X)\sum\nolimits_{\,k}\eta^k(\pounds_V\,\xi_j)\xi_k
 - g(X,\pounds_V\,\xi_i)\,\bar\xi +\bar\eta(\pounds_V\,\xi_i)X  \\
%\notag
 & - \bar\eta(\pounds_V\,\xi_i)\sum\nolimits_{\,k}\eta^k(X)\,\xi_k +\sum\nolimits_{\,k}\eta^k(X)\eta^k(\pounds_V\,\xi_i)\,\bar\xi
 %%%
 = 0 .
 %4(s-1)n\beta^2\{X- \sum\nolimits_{\,k}\eta^k(X)\xi_k\} - 2\,(2\,n(s-1)+1)\beta^2\widetilde QX .
\end{align}
For $X\in{\cal D}$
%and $i=j$
the equation \eqref{E-3.19} reduces to the following:
\begin{align}\label{E-3.19b}
 \sum\nolimits_{\,k}(\pounds_V\,\eta^k)(X)\xi_k  + \bar\eta(\pounds_V\,\xi_i)X + \bar\eta(\pounds_V\,\xi_j)X - g(X,\pounds_V\,\xi_i)\,\bar\xi  = 0 .
 %4(s-1)n\beta^2\{X- \sum\nolimits_{\,k}\eta^k(X)\xi_k\} - 2\,(2\,n(s-1)+1)\beta^2\widetilde QX .
\end{align}
Taking the ${\cal D}$- and ${\cal D}^\bot$- components of \eqref{E-3.19b} yields
\begin{align}\label{E-3.19c}
 \bar\eta(\pounds_V\, \xi_i)=0 ,\quad
 (\pounds_V\,\eta^k)(X)  = g(X,\pounds_V\,\xi_i)\quad (X\in{\cal D}).
\end{align}
Using \eqref{2.5-f-beta}, we write \eqref{Eq-1.1} with $Y=\xi_k$ as
\begin{align}\label{E-3.20}
%\notag
 (\pounds_V\,g)(X,\xi_k)
 %= 4\,n\beta^2\bar\eta(X) + 2\,\lambda\,\eta^k(X) -2(2\,n-1)\beta^2\eta^k(X) \\
 %& + 2\,\{(2\,n-1)\beta^2-\lambda\}\eta^k(X)  + 2(\lambda+\mu - 2\,n\beta^2)\,\bar\eta(X)
 = 2\,(2\,n\beta^2+\lambda+\mu)\,\bar\eta(X) .
\end{align}
Using the equality
\[
 (\pounds_V\,g)(\xi_i,\xi_k)
 = -g(\xi_i, \pounds_V\,\xi_k)-g(\xi_k, \pounds_V\,\xi_i)
 = -\eta^i(\pounds_V\,\xi_k)-\eta^k(\pounds_V\,\xi_i),
\]
the equation \eqref{E-3.20} for $X=\xi_i$ reduces to
\begin{align}\label{E-3.21}
 \eta^i(\pounds_V\,\xi_k) +\eta^k(\pounds_V\,\xi_i) = -2(2\,n\beta^2+\lambda +\mu) \ \Rightarrow\
  \bar\eta(\pounds_V\,\bar\xi) = -s^2(2\,n\beta^2+\lambda +\mu).
\end{align}
Comparing \eqref{E-3.21} with $\bar\eta(\pounds_V\,\bar\xi) = 0$,
see \eqref{E-3.19c}, we achieve the required result $\lambda+\mu=-2\,n\,\beta^2$.
\end{proof}

%For a weak $\beta f$-KM equipped with an $\eta$-RS \eqref{Eq-1.1}, using \eqref{Eq-normal-2}, we get
%\begin{align}\label{Lie-V-g}
% (\pounds_V\,g)(\xi_i,\xi_j)=g(\xi_i, [V,\xi_j])=0.
%\end{align}
%Thus, using \eqref{Eq-1.1} in the Lie derivative of $g(\xi_i, \xi_j) = \delta_{ij}$, we obtain  $\Ric(\xi_i,\xi_j) = \lambda +\mu$.
%Finally, using the equality $\Ric(\xi_i,\xi_j)=-2\,n\,\beta^2$, see \eqref{2.5-f-beta}, we achieve the required result.

The following theorem generalizes Theorem~3 in \cite{rov-126} where $s=1$.

%%%%%%%%%%%%%%% NEW
\begin{theorem}\label{thm3.1A}
Let $(g,V)$ represent an $\eta$-RS \eqref{Eq-1.1} on a weak $\eta$-Einstein \eqref{Eq-2.10} $\beta f$-KM
%$M^{2n+s}({f},Q,\xi_i,\eta^i,g)$
with $\beta=const$.
If~$s>1$, then we assume $V\in{\cal D}$.
%If the manifold is also $\eta$-Einstein \eqref{Eq-2.10},
Then $a=-2\,s\,n\beta^2$, $b=2(s-1)n\beta^2$ and the scalar curvature is equal to $r=-2\,s\,n(2\,n+1)\,\beta^2$;
moreover, if $s=1$ then the manifold is Einstein.
\end{theorem}

\begin{proof}
Set $s>1$.
Taking covariant derivative of \eqref{3.16} along $Y$ and using \eqref{2.3b}, we~get
\begin{align}\label{AA}
\nonumber
 & (\nabla_Y \Ric^\sharp)X = \frac{Y(r)}{2n}\big\{X-\sum\nolimits_{\,j}\eta^j(X)\xi_j\big\} \\
\nonumber
 & -\big((2\,n+s)\,\beta^2 + \frac{r}{2\,n} \big)\beta\big\{\,g(X,Y)\,\bar\xi + \bar\eta(X)\big( Y  - \sum\nolimits_{\,j}\eta^j(Y)\xi_j\big)
 -\sum\nolimits_{\,i}\eta^i(X)\,\eta^i(Y)\,\bar\xi\,\big\} \\
 & - 2(s-1)n\beta^3\big\{ \big(g(X,Y) -\sum\nolimits_{\,p}\eta^p(X)\,\eta^p(Y)\big)\,\bar\xi
 + \bar\eta(X)\big(Y - \sum\nolimits_{\,p}\eta^p(Y)\xi_p \big) \big\}.
\end{align}
Contracting \eqref{AA} over $Y$ and using the well known identity ${\rm div}_g\Ric = \frac12\,dr$,
%$X= \sum_{a=1}^{2n+s} g(X,e_a)e_a$ for a local orthonormal basis $(e_a)$,
we get
\begin{align}\label{AA-2}
 (n-1)\,X(r) = -\sum\nolimits_{\,i}\eta^i(X)\,\xi_i(r) - 2\,n\beta\big\{r + 2\,s\,n(2\,n + 1)\,\beta^2\big\}\,\bar\eta(X).
% -4(s-1)n^2\,\beta^3\,\bar\eta(X).
\end{align}
%%%%%%%%%%%%%%%%%%%%
Using \eqref{3.1A-f-beta} in \eqref{AA-2} yields
\begin{align}\label{AD}
 X(r) = -2\,\beta\big\{r + 2\,s\,n(2\,n + 1)\,\beta^2\big\}\,\bar\eta(X) \quad  (X\in\mathfrak{X}_M) ;
\end{align}
hence $r$ is constant along the leaves of ${\cal D}$.
Using \eqref{3.16} and \eqref{AA} in \eqref{3.9}, and then applying \eqref{AD} and Lemma~\ref{lem3.2} gives
%\begin{align*}
 $(\pounds_V R)_{X,Y}\,\xi_i = 0$
 %& = 2\,\beta(\lambda+\mu)\big(2\,\eta^i(Y) - \bar\eta(Y)\big)\,s\,\beta\big\{X - \sum\nolimits_{\,j}\eta^j(X)\xi_j\big\} \\
 %%%%
 %& - 2\,\beta(\lambda+\mu)\big(2\,\eta^i(X) - \bar\eta(X)\big)\,s\,\beta\big\{Y - \sum\nolimits_{\,j}\eta^j(Y)\xi_j\big\}
%\end{align*}
for all $X,Y\in \mathfrak{X}_M$.  Therefore,
%for any local orthonormal frame $\{e_p\}$,
%using $g(e_p,e_p) - \sum\nolimits_{\,j}\eta^j(e_p)\eta^j(e_p)=2\,n$ and $?$,
we get
\begin{align}\label{AF}
%\notag
 (\pounds_V \Ric)(Y, \xi_i) = {\rm trace}\{X \to (\pounds_V R)_{X,Y}\,\xi_i\}
 %\sum\nolimits_{\,p} g((\pounds_V R)_{e_p,Y}\,\xi_i, e_p)
 = 0.
%  \\
%\notag
% & = \sum\nolimits_{\,p} \Big\{ 2\,\beta(\lambda+\mu)\big(2\,\eta^i(Y) - \bar\eta(Y)\big)\,s\,\beta
% \big\{g(e_p,e_p) - \sum\nolimits_{\,j}\eta^j(e_p)\eta^j(e_p)\big\} \\
 %%%%
% & - 2\,\beta(\lambda+\mu)\big(2\,\eta^i(e_p) - \bar\eta(e_p)\big)\,s\,\beta\big\{g(Y,e_p) - \sum\nolimits_{\,j}\eta^j(Y)\eta^j(e_p))\big\}  \Big\}
% =0 .
 %\quad (Y\in \mathfrak{X}_M).
\end{align}
%%%%%%%%%
%Note that \eqref{Eq-1.1} with $Y=\bar\xi$, Lemma~\ref{lem3.3} and \eqref{2.5-f-beta} yield
%\[
% (\pounds_V\,g)(Y,\bar\xi)=2\,s\,\bar\eta(Y)(2\,n\beta^2+\lambda+\mu)=0.
%\]
%Thus, taking the Lie derivative of the equality $\bar\eta(Y)=g(Y,\bar\xi)$ along $V$, we obtain
%\begin{align}\label{Eq-B00}
% (\pounds_V\,\bar\eta)(Y) = (\pounds_V\,g)(Y,\bar\xi) + g(Y,\pounds_V\,\bar\xi) = g(Y,\pounds_V\,\bar\xi) .
%\end{align}
%%%%%%%%%
Equation \eqref{2.5-f-beta} gives $\Ric(Y,\xi_i)=-2\,n\,\beta^2\,\bar\eta(Y)$. Taking its Lie derivative along $V$ yields
\[
 (\pounds_V \Ric)(Y, \xi_i) +\Ric(Y,\pounds_V\,\xi_i) = -2\,n\,\beta^2(\pounds_V\,\bar\eta)(Y)
\]
for all $Y\in\mathfrak{X}_M$.
Inserting \eqref{AF} in the preceding equation, we have
\begin{align}\label{Eq-A00}
\Ric(Y,\pounds_V\,\xi_i)= -2\,n\,\beta^2\big\{(\pounds_V\,g)(Y,\bar\xi) +g(Y,\pounds_V\,\bar\xi)\big\}
= -2\,n\,\beta^2\,g(Y,\pounds_V\,\bar\xi).
\end{align}
In view of \eqref{Eq-1.1}, \eqref{2.5-f-beta} and \eqref{3.16}, the equation \eqref{Eq-A00} becomes
\begin{align}\label{AI}
%\nonumber
 \Big(s\beta^2 +\frac{r}{2\,n}\Big)\big\{ g(Y,\pounds_V\,\xi_i) -
 %\Big((2\,n+s)\beta^2 +\frac{r}{2\,n}\Big)
 \sum\nolimits_{\,p}\eta^p(Y)\,\eta^p(\pounds_V\,\xi_i)\big\} 
  -2\,n\beta^2 \bar\eta(Y)\,\bar\eta(\pounds_V\,\xi_i) = -2\,n\,\beta^2 g(Y,\pounds_V\,\bar\xi).
\end{align}
%Using $\eta^p(\pounds_V\,\xi_i)=0$, see \eqref{Eq-normal-2},
%and $\lambda+\mu=-2\,n\,\beta^2$, see Lemma~\ref{lem3.3},
For $Y\in{\cal D}$, \eqref{AI} reduces to the following:
\begin{align}\label{AJ-1}
 \big(s\beta^2+\frac{r}{2\,n}\big)\,g(Y,\pounds_V\,\xi_i) = -2\,n\,\beta^2 g(Y,\pounds_V\,\bar\xi)\quad (Y\in{\cal D}),
\end{align}
from which we obtain
\begin{align}\label{AJ}
 \big(2\,s\,n(2\,n+1)\beta^2 + r\big)g(Y,\pounds_V\,\bar\xi)=0\quad (Y\in{\cal D}).
\end{align}

\noindent
\textbf{Case I}. Let's assume that $(M,g)$ has constant scalar curvature $r=-2\,s\,n(2\,n+1)\,\beta^2$.
%$s(2\,n+1)\beta^2+\frac{r}{2\,n}=0$.
Then by \eqref{3.16}, we obtain
\[
 {\rm Ric}^\sharp X = -2\,s\,n\beta^2 \big\{ X - \sum\nolimits_{\,j}\eta^j(X)\,\xi_j\big\}  -2\,n\,\beta^2 \bar\eta(X)\,\bar\xi .
\]
Hence, $(M,g)$ is an $\eta$-Einstein manifold \eqref{Eq-2.10} with $a=-2\,s\,n\beta^2$ and $b=2(s-1)n\beta^2$.

\noindent
\textbf{Case II}. Let's assume that $s(2\,n+1)\beta^2+\frac{r}{2\,n}\ne 0$ on an open set $\mathcal{U}$ of $M$.
Then $\pounds_V\,\bar\xi=[V,\bar\xi]=0$ on $\mathcal{U}$, see \eqref{AJ} and \eqref{Eq-normal-2}.
Let us show that this leads to a contradiction.
If $\pounds_V\,\xi_i\ne0$ for some $i$, then from \eqref{AJ-1} and $\pounds_V\,\xi_i\in{\cal D}$, see \eqref{Eq-normal-2}, we get $s\beta^2+\frac{r}{2\,n}=0$.
Using the previous equality in \eqref{3.16}, we get $\Ric^\sharp X = 0$ for all $X\in{\cal D}$.
By this and \eqref{2.5-f-beta}, the following is true:
\[
 \Ric^\sharp X = -2\,n\,\beta^2\bar\eta(X)\,\bar\xi.
\]
Therefore, using Lemma~\ref{Lem-1}, we obtain
\[
 (\nabla_{\xi_i}\Ric^\sharp)X
 = -2\,n\beta^2\nabla_{\xi_i}\big(\bar\eta(X)\bar\xi\big) - \Ric^\sharp(\nabla_{\xi_i}X)
 = 0\quad (1\le i\le s).
\]
By the previous equality, $(\nabla_{\bar\xi}\Ric^\sharp)X = 0$ is true, hence by Theorem~\ref{Th-01},
we get $r=-2\,s\,n(2\,n+1)\,\beta^2$ -- a contradiction.
Therefore, $\pounds_V\,\xi_i=[V,\xi_i]=0$ for all $i$ on some open set $\mathcal{V}\subset \mathcal{U}$.
%%%
It~follows that
\begin{align}\label{AK}
 \nabla_{\xi_i}V=\nabla_V\,\xi_i = \beta\{V-\sum\nolimits_{\,p}\eta^p(V)\,\xi_p\},
\end{align}
where we have used \eqref{2.3b}.
Replacing $Y$ by $\xi_i$ in \eqref{ff} and using \eqref{2.3b}, \eqref{2.4} and \eqref{AK}, we get
\begin{align}\label{LV-Nabla}
 (\pounds_V\nabla)(X,\xi_i) = -\beta^2\{g(X,V) - \sum\nolimits_{\,j}\eta^j(X)\eta^j(V)\}\bar\xi.
\end{align}
Further, from \eqref{3.7} and \eqref{LV-Nabla}, we get
\begin{align}\label{b-Ric}
 \Ric^\sharp X = - 2\,n\,\beta^2\big\{ s\big(X - \sum\nolimits_{\,j}\eta^j(X)\xi_j\big) + \bar\eta(X)\bar\xi\,\big\}
 -(\beta/2)\{g(X,V) - \sum\nolimits_{\,j}\eta^j(X)\eta^j(V)\}\bar\xi.
\end{align}
Comparing ${\cal D}$-components of \eqref{b-Ric} and \eqref{3.16},
%Combining both,
%we again get
yields a contradiction: $r=-2\,s\,n(2\,n+1)\beta^2$ on ${\cal V}$.
\end{proof}

%Next, we consider an $\eta$-Einstein weak $\beta f$-KM as an $\eta$-Ricci soliton.

\begin{definition}
%[see \cite{RP-2}]
\rm
A vector field $X$ on a weak metric $f$-manifold $M({f},Q,\xi_i,\eta^i,g)$
is called a {\it contact vector field},
% (or, a {\it contact infinitesimal transformation}),
 if
the there exists a function $\rho\in C^\infty(M)$ such~that
\begin{align}\label{3.20}
 \pounds_{X}\,\eta^i=\rho\,\eta^i\quad(1\le i\le s),
\end{align}
and if $\rho=0$, i.e., the flow of $X$ preserves $\eta^i$, then $X$ is said to be a \textit{strictly contact vector field}.
\end{definition}

We study the interaction of a weak $\beta f$-KM with an $\eta$-RS whose potential vector field $V$ is~either a contact vector field or collinear to $\bar\xi$.
The following theorem generalizes Theorem~4 ($s=1$) in~\cite{rov-126}.

\begin{theorem}\label{thm3.3}
Let $M^{2n+s}({f},Q,\xi_i,\eta^i,g)$, be a weak $\beta f$-KM with $\beta=const$.
% and $\dim M>3$.
% and $\xi_i(r)=0\ (1\le i\le s)$.
If $(g,V)$ represents an $\eta$-RS \eqref{Eq-1.1}, with a contact potential vector field $V$, then $V$ is strictly contact
and the manifold is $\eta$-Einstein \eqref{Eq-2.10} with $a=-2\,s\,n\beta^2,\ b=2(s-1)n\beta^2$ of scalar curvature $r=-2\,s\,n(2\,n+1)\beta^2$.
\end{theorem}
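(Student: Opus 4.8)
The plan is to first pin down the interaction of $V$ with the characteristic fields $\xi_i$, and then feed the result into the curvature machinery already developed for Theorem~\ref{thm3.1A}. First I would evaluate the soliton equation \eqref{Eq-1.1} on the pair $(\xi_i,X)$: using $\Ric(\xi_i,X)=-2\,n\,\beta^2\,\bar\eta(X)$ from \eqref{2.5-f-beta}, the identity $\lambda+\mu=-2\,n\,\beta^2$ from Lemma~\ref{lem3.3}, and collapsing the $\eta^k$-terms, the right-hand side and the Ricci term cancel, leaving $(\pounds_V\,g)(\xi_i,X)=0$ for every $X$. Combining this with the general identity $(\pounds_V\eta^i)(X)=(\pounds_V\,g)(\xi_i,X)+g([V,\xi_i],X)$, which follows from $\eta^i=g(\xi_i,\cdot)$, the contact hypothesis \eqref{3.20} becomes $\rho\,\eta^i(X)=g([V,\xi_i],X)$, i.e. $[V,\xi_i]=\rho\,\xi_i$. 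On the other hand, differentiating $\eta^i(\xi_i)=1$ gives $(\pounds_V\eta^i)(\xi_i)=-\eta^i([V,\xi_i])$, so $\rho=-\rho$ and hence $\rho=0$. Thus $V$ is strict contact and, more importantly, $\pounds_V\,\xi_i=[V,\xi_i]=0$ for all $i$.

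With $[V,\xi_i]=0$ in hand, I am exactly in the situation of Case~II in the proof of Theorem~\ref{thm3.1A}, so I would reuse that computation. Namely, \eqref{2.3b} yields $\nabla_{\xi_i}V=\nabla_V\xi_i=\beta\{V-\eta^p(V)\xi_p\}$ as in \eqref{AK}; substituting $Y=\xi_i$ into the commutation formula \eqref{ff} and applying \eqref{2.3b} together with the curvature identity \eqref{2.4} produces \eqref{LV-Nabla}, namely $(\pounds_V\nabla)(X,\xi_i)=-\beta^2\{g(X,V)-\sum_j\eta^j(X)\eta^j(V)\}\bar\xi$. Equating this with the soliton expression \eqref{3.7} for $(\pounds_V\nabla)(X,\xi_i)$, which holds for any $\eta$-Ricci soliton, and solving for $\Ric^\sharp X$ gives \eqref{b-Ric}.

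Finally, \eqref{b-Ric} expresses $\Ric^\sharp$ as the desired $\eta$-Einstein operator plus the residual term $-(\beta/2)\{g(X,V)-\sum_j\eta^j(X)\eta^j(V)\}\bar\xi$, and the remaining task is to show this residual vanishes. Since $\Ric^\sharp$ is self-adjoint and $\Ric^\sharp\bar\xi=-2\,s\,n\,\beta^2\,\bar\xi$ by \eqref{2.5-f-beta}, I would pair \eqref{b-Ric} with $\bar\xi$ for $X\in{\cal D}$: the self-adjoint side gives $g(X,\Ric^\sharp\bar\xi)=0$, while the right-hand side gives $-(s\,\beta/2)\,g(X,V)$, forcing $g(X,V)=0$ for all $X\in{\cal D}$. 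Hence the bracket $g(X,V)-\sum_j\eta^j(X)\eta^j(V)$ vanishes for every $X$, the residual term disappears, and \eqref{b-Ric} reduces to $\Ric^\sharp X=-2\,n\,\beta^2\{s(X-\eta^j(X)\xi_j)+\bar\eta(X)\bar\xi\}$. Rewriting this in the form \eqref{Eq-2.10} reads off $a=-2\,s\,n\,\beta^2$ and $b=2(s-1)n\,\beta^2$, and tracing gives $r=(2\,n+s)a+sb=-2\,s\,n(2\,n+1)\,\beta^2$. The main obstacle is the passage to $\pounds_V\xi_i=0$ from the contact hypothesis, and then the elimination of the $V$-dependent residual term in \eqref{b-Ric}; once the self-adjointness of $\Ric^\sharp$ is invoked to kill that term, the remaining identifications are routine, with the only delicate bookkeeping being the separation of the diagonal and off-diagonal $\eta^i\otimes\eta^j$ contributions when matching \eqref{Eq-2.10}.
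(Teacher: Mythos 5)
Your proof is correct, and its first half --- establishing $\rho=0$, hence that $V$ is strict contact and $\pounds_V\,\xi_i=0$ --- is essentially the paper's argument, though you derive $(\pounds_V\,g)(\xi_i,X)=0$ from the soliton equation together with \eqref{2.5-f-beta} and Lemma~\ref{lem3.3}, and obtain $\rho=-\rho$ from differentiating $\eta^i(\xi_i)=1$, where the paper instead appeals to $\pounds_V\,\xi_i\in{\cal D}$; your version is, if anything, more self-contained. The second half takes a genuinely different route: the paper substitutes $Y=\xi_i$ into the Lie-derivative formula \eqref{3.21} and uses $\pounds_V\,\eta^p=\pounds_V\,\xi_p=0$ to obtain $(\pounds_V\nabla)(X,\xi_i)=0$ in one stroke, after which \eqref{3.7} immediately yields \eqref{3.22b}. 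You instead recycle Case~II of the proof of Theorem~\ref{thm3.1A}: from $[V,\xi_i]=0$ you pass through \eqref{AK}, \eqref{ff} and \eqref{LV-Nabla} to reach \eqref{b-Ric}, which still carries the residual term $-(\beta/2)\{g(X,V)-\sum_j\eta^j(X)\eta^j(V)\}\bar\xi$, and you then need the extra observation that self-adjointness of $\Ric^\sharp$ together with $\Ric^\sharp\bar\xi=-2\,s\,n\,\beta^2\bar\xi$ forces $g(X,V)=0$ for $X\in{\cal D}$, which kills the residual. Both routes land on the same operator $\Ric^\sharp X=-2\,n\,\beta^2\{s(X-\eta^j(X)\xi_j)+\bar\eta(X)\bar\xi\}$ and hence the same $a$, $b$ and $r$. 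The paper's route is shorter and bypasses the curvature identity \eqref{2.4} entirely; yours costs one additional step but yields the geometric byproduct that the potential field of such a soliton is necessarily a section of ${\cal D}^\bot$, i.e., a combination of the $\xi_i$, which the paper's argument leaves implicit.
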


\begin{proof}
Taking Lie derivative of $\eta^i(X)=g(X,\xi_i)$ along $V$ and using \eqref{3.20} and $(\pounds_V\,g)(X,\xi_i)=0$, see \eqref{E-3.20}, we obtain $\pounds_V\,\xi_i=\rho\,\xi_i$.
Then, using $\pounds_V\,\xi_i\in{\cal D}$, see the second equality in \eqref{Eq-normal-2}, we get $\rho=0$.
Therefore, $\pounds_V\,\xi_i=0$ and $V$ is a strictly contact vector field. Also, \eqref{3.20} gives $\pounds_{V}\,\eta^j=0$.
Setting $Y=\xi_i$ in \eqref{3.21} and using \eqref{2.3b} and the equality
$(\pounds_{V}\,\eta^j)(X) = V(\eta^j(X)) - \eta^j([V, X])$,
we find
\begin{align}\label{3.22}
\nonumber
 & (\pounds_V \nabla)(X, \xi_i) = \beta\pounds_V(X-\sum\nolimits_{\,p}\eta^p(X)\xi_p)
 -\beta(\pounds_V X -\sum\nolimits_{\,p}\eta^p(\pounds_V X)\,\xi_p) \\
 & = -\beta\sum\nolimits_{\,p}\{(\pounds_V\,\eta^p)(X)\xi_p +\eta^p(X)\pounds_V\,\xi_p\} + \beta\sum\nolimits_{\,p}\eta^p(\pounds_V X)\,\xi_p .
\end{align}
From \eqref{3.22}, since $\pounds_V\,\eta^p=\pounds_V\,\xi_p=0$ is true and the distribution ${\cal D}$ is involutive, i.e., $\pounds_Y X\in{\cal D}\ (X,Y\in{\cal D})$, we obtain $(\pounds_V \nabla)(X, \xi_i)=0$. Using \eqref{3.7}, we get
%\eqref{3.16B}.
\begin{align*}%\label{3.16B}
 \Ric^\sharp X & = -2\,n\,\beta^2\big\{ sX - s\sum\nolimits_{\,j}\eta^j(X)\,\xi_j + \bar\eta(X)\bar\xi\,\big\} .
% \Ric^\sharp X & = -2\,n\beta^2\big\{s\,X - (s-1)\sum\nolimits_{\,j}\eta^j(X)\,\xi_j + \sum\nolimits_{\,i\ne j} \eta^i(X)\,\xi_j\big\} ,\\
%\label{3.16D}
% r & = -2\,s\,n(2n+1)\beta^2.
\end{align*}
%\begin{align}\label{3.22b}
% \Ric^\sharp X = - 2\,n\,\beta^2\big\{ s X - s\sum\nolimits_{\,j}\eta^j(X)\xi_j + \bar\eta(X)\,\bar\xi\,\big\}.
%\end{align}
Therefore, our $(M,g)$ is an $\eta$-Einstein manifold \eqref{Eq-2.10} with $a=-2\,s\,n\beta^2,\ b=2(s-1)n\beta^2$
and constant scalar curvature
%Taking the trace of \eqref{3.16B} gives
$r=-2\,s\,n(2\,n+1)\beta^2$.
\end{proof}

The following theorem generalizes Theorem~5 in \cite{rov-126} (where $s=1$).

\begin{theorem}\label{thm3.4}
Let $M^{2n+s}({f},Q,\xi_i,\eta^i,g)$, be a weak $\beta f$-KM with $\beta=const$.
% and $\dim M>3$.
If~$(g,V)$ represents an $\eta$-RS \eqref{Eq-1.1} with a potential vector field $V$ collinear to $\bar\xi$:
$V=\delta\,\bar\xi$ for a smooth function $\delta\ne0$ on $M$, then $\delta=const$
and the manifold is $\eta$-Einstein \eqref{Eq-2.10} with $a=-2\,s\,n\beta^2$ and $b=2(s-1)n\beta^2$ of constant scalar curvature $r=-2\,s\,n(2\,n+1)\beta^2$.
\end{theorem}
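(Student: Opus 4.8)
The plan is to exploit the hypothesis $V=\delta\,\bar\xi$ to turn the $\eta$-Ricci soliton equation into concrete constraints on $\delta$ and on the Ricci operator. First I would compute $\pounds_V\,g$ explicitly. Since $V=\delta\,\bar\xi$, for any $X,Y$ we have
\[
 (\pounds_V\,g)(X,Y) = g(\nabla_X V, Y) + g(\nabla_Y V, X),
\]
and using $\nabla_X(\delta\bar\xi) = X(\delta)\,\bar\xi + \delta\,\nabla_X\bar\xi$ together with \eqref{2.3b} (which gives $\nabla_X\bar\xi = s\beta\{X-\eta^j(X)\xi_j\}$) this becomes a sum of a term $2s\beta\delta\{g(X,Y)-\sum_j\eta^j(X)\eta^j(Y)\}$ coming from the $\beta$-Kenmotsu structure and a term $X(\delta)\bar\eta(Y)+Y(\delta)\bar\eta(X)$ carrying the derivatives of $\delta$.

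\textbf{Extracting that $\delta$ is constant.} Plugging this expression for $\pounds_V\,g$ into the soliton equation \eqref{Eq-1.1} and then testing against the characteristic directions is the key maneuver. Evaluating the resulting identity with $Y=\xi_i$ (and using $\Ric^\sharp\xi_i=-2n\beta^2\bar\xi$ from \eqref{2.5-f-beta}) should isolate the gradient of $\delta$: one obtains an equation of the form $\tfrac12 X(\delta)+\tfrac12\bar\eta(X)\,\bar\xi(\delta)+(\text{structural terms})=0$. I expect that decomposing $X$ into its $\mathcal{D}$-part and $\mathcal{D}^\bot$-part forces $X(\delta)=0$ for $X\in\mathcal{D}$ and, after using Lemma~\ref{lem3.3} ($\lambda+\mu=-2n\beta^2$) to cancel the structural constants, also $\bar\xi(\delta)=0$. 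Hence $\delta=const$. This step — cleanly separating the derivative-of-$\delta$ information from the curvature information — is the main obstacle, since the two contributions to $\pounds_V\,g$ are entangled and one must choose the test directions carefully so that the structural terms (governed by $s,n,\beta,\lambda,\mu$) drop out exactly.

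\textbf{Identifying the Ricci operator.} Once $\delta$ is constant, the derivative term vanishes and $\pounds_V\,g$ reduces to the purely structural expression $2s\beta\delta\{g-\sum_j\eta^j\otimes\eta^j\}$. Substituting this back into \eqref{Eq-1.1} and solving for $\Ric$ yields $\Ric$ as an explicit combination of $g$, $\sum_i\eta^i\otimes\eta^i$ and the cross terms $\sum_{i\ne j}\eta^i\otimes\eta^j$, i.e. the manifold is $\eta$-Einstein in the sense of \eqref{Eq-2.10}. At this point I would invoke Theorem~\ref{thm3.1A}: we now have an $\eta$-Einstein weak $\beta$-Kenmotsu $f$-manifold with $\beta=const$ carrying an $\eta$-Ricci soliton, so that theorem delivers $a=-2sn\beta^2$, $b=2(s-1)n\beta^2$ and constant scalar curvature $r=-2sn(2n+1)\beta^2$. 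Thus the bulk of the final statement follows by reduction to the already-proved Theorem~\ref{thm3.1A}, and the genuinely new content is only the computation forcing $\delta=const$.

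\textbf{Consistency check.} As a sanity check I would verify that the value $\lambda+\mu=-2n\beta^2$ from Lemma~\ref{lem3.3}, combined with the computed $\Ric$ and the structural $\pounds_V\,g$, is consistent with the trace of \eqref{Eq-1.1}; comparing the trace (using $r=-2sn(2n+1)\beta^2$) pins down $s\beta\delta$ in terms of $\lambda$, which confirms that the soliton constants and the constant $\delta$ fit together without contradiction.
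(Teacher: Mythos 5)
Your proposal is correct and follows essentially the same route as the paper: compute $\pounds_{\delta\bar\xi}\,g$ as a structural term plus a $d\delta$ term, test the soliton equation against the $\xi_i$ using $\Ric^\sharp\xi_i=-2n\beta^2\bar\xi$ and Lemma~\ref{lem3.3} to force $X(\delta)=0$ on $\mathcal{D}$ and $\xi_i(\delta)=0$, then read off the $\eta$-Einstein form of $\Ric$ and invoke Theorem~\ref{thm3.1A}. The only cosmetic difference is that the paper first rewrites the soliton equation as an explicit formula for $2\,\Ric$ before substituting $X=Y=\xi_i$, whereas you substitute the characteristic directions directly; the content is identical.
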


\begin{proof} Using \eqref{2.3-f-beta} in the covariant derivative of $V=\delta\,\bar\xi$ with any $X\in\mathfrak{X}_M$ yields
\[
 \nabla_X V = X(\delta)\,\bar\xi +\delta\,\beta(X-\sum\nolimits_{\,j}\eta^j(X)\,\xi_j)\quad (X\in\mathfrak{X}_M).
\]
Using this and calculations
\begin{align*}
& (\pounds_{\delta\,\bar\xi}\,g)(X,Y)=\delta(\pounds_{\bar\xi}\,g)(X,Y) +X(\delta)\,\bar\eta(Y) +Y(\delta)\,\bar\eta(X),\\
& (\pounds_{\bar\xi}\,g)(X,Y)=2\,s\,\beta\{g(X,Y) -\sum\nolimits_{\,j}\eta^j(X)\,\eta^j(Y)\},
\end{align*}
we transform the $\eta$-RS equation \eqref{Eq-1.1} into
\begin{align}\label{3.23}
\notag
 2\,{\rm Ric}(X,Y) =& -X(\delta)\,\bar\eta(Y) -Y(\delta)\,\bar\eta(X) 
 +2(\lambda-\delta\beta)\big\{ g(X,Y) - \sum\nolimits_{\,j}\eta^j(X)\,\eta^j(Y)\big\} \\
 & - 4\,n\beta^2\bar\eta(X)\,\bar\eta(Y)\quad (X,Y\in\mathfrak{X}_M).
%& 2\,{\rm Ric}(X,Y) = -X(\delta)\,\bar\eta(Y) -Y(\delta)\,\bar\eta(X) +2(\lambda-\delta\beta)\,g(X,Y) \\
%& +2(\delta\beta+\mu)\,\sum\nolimits_{\,j}\eta^j(X)\,\eta^j(Y) -4\,n\beta^2\,\sum\nolimits_{\,i\ne j}\eta^i(X)\,\eta^j(Y)\quad (X,Y\in\mathfrak{X}_M).
\end{align}
Inserting $X=Y=\xi_i$ in \eqref{3.23} and using \eqref{2.5-f-beta} and $\lambda+\mu=-2\,n\,\beta^2$, see Lemma~\ref{lem3.3},
we get $\xi_i(\delta)=0$. It~follows from \eqref{3.23} and \eqref{2.5-f-beta}
that $X(\delta)=0\ (X\in{\cal D})$. Thus $\delta$ is constant on $M$, and \eqref{3.23} reads~as
\begin{align*}%\label{3.24}
 {\rm Ric}= (\lambda-\delta\beta)\,\big\{ g - \sum\nolimits_{\,j}\eta^j\otimes\eta^j\big\} -2\,n\beta^2\bar\eta\otimes\bar\eta.
% {\rm Ric}= (\lambda-\delta\beta)\,g +(\delta\beta+\mu)\sum\nolimits_{\,j}\eta^j\otimes\eta^j -2\,n\beta^2\,\sum\nolimits_{\,i\ne j}\eta^i\otimes\eta^j.
\end{align*}
This shows that $(M,g)$ is an $\eta$-Einstein manifold with $a=\lambda-\delta\beta$ and $b=-\lambda+\delta\beta-2\,n\beta^2$ in \eqref{Eq-2.10}.
From Proposition~\ref{prop3.1A} we conclude that $\lambda=\delta\beta-2\,s\,n\beta^2$, $\mu=-\delta\beta+2(s-1)n\beta^2$,
and the scalar curvature of $(M,g)$ is $r=-2\,s\,n(2\,n+1)\beta^2$.
\end{proof}

\end{document}